\documentclass{birkjour}

\usepackage{amsmath,amssymb,mathtools}
\usepackage{bm}
\usepackage{hyperref}
\usepackage{enumitem}
\usepackage{booktabs}

\hypersetup{colorlinks=true,linkcolor=blue,citecolor=blue,urlcolor=blue}

\newtheorem{thm}{Theorem}[section]
\newtheorem{cor}[thm]{Corollary}
\newtheorem{lem}[thm]{Lemma}
\newtheorem{prop}[thm]{Proposition}

\theoremstyle{definition}
\newtheorem{defn}[thm]{Definition}
\newtheorem{notn}[thm]{Notation}

\theoremstyle{remark}
\newtheorem{rem}[thm]{Remark}

\numberwithin{equation}{section}

\newcommand{\Cl}{\mathrm{Cl}}
\newcommand{\so}{\mathfrak{so}}
\newcommand{\slR}{\mathfrak{sl}}
\newcommand{\glR}{\mathfrak{gl}}
\newcommand{\fkg}{\mathfrak{g}}
\newcommand{\fkn}{\mathfrak{n}}
\newcommand{\fkN}{\mathfrak{N}}
\newcommand{\fkL}{\mathfrak{L}}
\newcommand{\e}[1]{\bm{e}_{#1}}
\newcommand{\no}{n_{\mathrm{o}}}
\newcommand{\ninf}{n_{\infty}}
\newcommand{\R}{\mathbb{R}}
\newcommand{\C}{\mathbb{C}}
\newcommand{\Z}{\mathbb{Z}}

\newcommand{\ad}{\mathrm{ad}}

\providecommand{\rev}{{}}
\newenvironment{revblock}{}{}

\title[The Doublet Tower of $\Cl(4,2)$]
{The Doublet Tower of $\Cl(4,2)$:
Universal Nilpotency, Projector Identities,
and the $\slR(8,\R)$ Parabolic from Conformal Null Vectors}

\author[S.\,H.\ Hansen]{Steen\,H.\ Hansen}
\address{Dark Cosmology Center,
Niels Bohr Institute\\
Jagtvej 155, 2100 Copenhagen\\ Denmark}
\email{hansen@nbi.ku.dk}

\subjclass{Primary 15A66; Secondary 17B20, 81R25, 17B70, 18B81}

\keywords{Clifford algebra, conformal geometric algebra,
parabolic decomposition, null vectors, seesaw mechanism,
$\slR(8,\R)$, Lorentz representations, elementary symmetric
polynomials}

\date{\today}

\begin{document}

\begin{abstract}
The conformal geometric algebra $\Cl(4,2)\cong\R(8)$ carries
a natural $\Z_2$-grading defined by the dilatation bivector
$D=\e{4}\e{0}$, which satisfies $D^2=1$ and splits the
eight-dimensional spinor module into two four-dimensional
eigenspaces $S=S_+\oplus S_-$.
We prove that the $\ad_D$-eigenvectors of 
$\Cl(4,2)$, those
elements with eigenvalue~$\pm1$ under the Lie bracket
$[D,\cdot]$, are exhausted by 16 doublets of the form
$T^\pm_J=n_\bullet e_J$, where
$n_\bullet\in\{\ninf,\no\}$ is a null vector and
$e_J$ is a product of Lorentz-sector basis vectors indexed
by $J\subseteq\{1,2,3,5\}$.
These doublets satisfy universal algebraic identities:
$(T^\pm_J)^2=0$ (nilpotency) and
$T^+_J\,T^-_J=(-1)^{k(k+1)/2+1}\,\eta_J 2\Pi_+$
(single-projector proportionality), where $k=|J|$,
$\eta_J=\prod_{j\in J}\eta_{jj}$ is the spectator metric,
and $\Pi_+=\tfrac12(1+D)$.
The contracted sums $\sum_{|J|=k}T^+_J T^-_J$ are shown to
be controlled by the elementary symmetric polynomials $S_k$
of the Lorentz signature values $(+1,+1,+1,-1)$, with the
vanishing of $S_2=0$ reflecting the $(3,1)$ signature of
physical spacetime.
The even-grade doublets (belonging to
$\Cl^+(4,2)\cong\Cl(4,1)$) contribute $8\Pi_+$ to the
unweighted grand total, while the odd-grade doublets
cancel exactly.
The 32 doublet eigenvectors generate, under the commutator
bracket, the 63-dimensional Lie algebra
$\slR(8,\R)$
% -- the maximal parabolic of $\slR(8,\R)$
%associated with the partition $8=4+4$ -- 
in which the
conformal algebra $\so(4,2)$ embeds as a 15-dimensional
subalgebra.
\rev{All identities, including a closed form for the
commutators across the Levi interface, are proved in every
conformal extension $\Cl(p+1,q+1)$; there the commutator
span misses precisely the center of the algebra, a parity
dichotomy invisible at the Lorentz signature. Two
applications are developed: the top rung of the tower
computes the Dirac versus Majorana dichotomy of rank-five
pseudoscalars, and the family extensions $\Cl(4+2k,2)$
yield an exact description of the tower-realized flavour
couplings.}
\end{abstract}

\maketitle

%%% ====================================================================
%%% 1. INTRODUCTION
%%% ====================================================================
\section{Introduction}\label{sec:intro}

The conformal geometric algebra $\Cl(4,2)$, also known as the
Clifford algebra of the conformal compactification of Minkowski
spacetime, has been a subject of sustained interest in both
mathematics and physics.
In the geometric-algebra tradition initiated by
Hestenes~\cite{HestenesSobczyk1984}, the algebra
$\Cl(4,1)$, which is the conformal model of three-dimensional
Euclidean space, was developed into a powerful computational
framework for geometry by
Li, Hestenes, and Rockwood~\cite{LiHesRock2001} and by
Doran and Lasenby~\cite{DoranLasenby2003}.
The counterpart $\Cl(4,2)$ extends this framework
to Minkowski spacetime and carries the conformal algebra
$\so(4,2)$ as a bivector subalgebra.

Clifford algebras provide natural realizations of orthogonal
Lie algebras through the commutator bracket on
bivectors~\cite{Lounesto2001}, and the interplay between the
full associative Clifford product and the Lie structure has
been exploited in diverse physical contexts.
Castro and Pav\v{s}i\v{c}~\cite{CastroPavsic2003} showed
that the conformal group emerges from $C$-space geometry,
and Castro~\cite{Castro2012} embedded gauge and gravitational
structures in Clifford algebras of higher dimension.
Roelfs and De~Keninck~\cite{Roelfs2023} demonstrated that
the versor structure of geometric algebra reveals natural
gradings on Lie groups not visible in matrix representations,
while Hitzer~\cite{Hitzer2024} gave a systematic
classification of the involutions and anti-involutions of
$\Cl(3,1)$.

A central feature of conformal geometric algebra is the
\emph{null-vector} construction.
The two extra dimensions added to the base space introduce
null vectors $\no$ and $\ninf$ (the ``origin'' and ``point at
infinity'') satisfying $\no^2=\ninf^2=0$ and
$\no\cdot\ninf=-1$.
These null vectors are used to build the translation generators
$P_\mu=\e{\mu}\wedge\ninf$ and special conformal transformation
(SCT) generators $K_\mu=\e{\mu}\wedge\no$ of $\so(4,2)$, with
the dilatation $D=\no\wedge\ninf=\e{4}\e{0}$ providing the
parabolic grading element.
The resulting three-graded decomposition
$\so(4,2)=\fkn_-\oplus\fkg_0\oplus\fkn_+$ is a standard tool
in conformal field theory~\cite{Dirac1936,Kastrup2008} and
parabolic geometry~\cite{CapSlovak2009}.

In a companion paper~\cite{Hansen2026seesaw}, we showed
that the three algebraic identities
$P_\mu P^\mu=K_\mu K^\mu=0$ (contracted nilpotency),
$P_\mu K^\mu=8\Pi_+$ (asymmetric projector proportionality),
and $\{D,P_\mu\}=0$ (Clifford anticommutation)
precisely encode the block structure of the type-I seesaw
mass matrix from neutrino
physics~\cite{Minkowski1977,Yanagida1979,GellMann1979,Mohapatra1980}.
This raised the question of whether these identities are
specific to the conformal generators or reflect a deeper
structure of $\Cl(4,2)$.
\rev{The present paper is self-contained: every identity
used below is stated and proved here in full generality,
with the companion paper cited only as the origin of the
$k=1$ observation.}

In the present paper we answer this question.
We prove that the conformal generators $P_\mu$ and $K_\mu$ are
4 of 16 doublet pairs $T^\pm_J=n_\bullet e_J$, obtained
by ``dressing'' the null vectors with all possible products of
Lorentz-sector basis vectors.
The three seesaw identities hold universally for every doublet,
with coefficients determined by the spectator metric $\eta_J$
and the grade~$k=|J|$.
The contracted sums at each grade are controlled by the
elementary symmetric polynomials of the Lorentz signature, which is a
connection that, to the best of our knowledge, has not
previously appeared in the literature.
The doublet tower carries a palindromic sequence of Lorentz
representations related by Hodge duality, and the 32
eigenvectors generate the full Lie algebra $\slR(8,\R)$ under
commutation.
\rev{None of this is specific to the $(3,1)$ base:
Section~\ref{sec:general} proves the entire package in
every conformal extension $\Cl(p+1,q+1)$, where the span of
the Levi interface turns out to miss precisely the center
of the algebra, a dichotomy invisible at the Lorentz
signature. Two applications close the paper. The top rung
of the tower computes the Dirac versus Majorana dichotomy
of rank-five pseudoscalars and exposes the conformal
entanglement of the Majorana option, and the family
extensions $\Cl(4+2k,2)$ turn the tower into a flavour
framework whose reach and limits we determine address.}

The maximal parabolic subalgebra of $\slR(n,\R)$ associated
with a partition $n=p+q$ has abelian nilradicals of dimension
$pq$ and Levi factor
$\slR(p)\oplus\slR(q)\oplus\R$.
%$\mathfrak{s}(\glR(p)\times\glR(q))\oplus\R$.
This class of parabolic subalgebras has been studied
extensively in representation theory (Richardson, R\"ohrle,
and Steinberg~\cite{Richardson1992}; Panyushev~\cite{Panyushev1999})
and more recently in the context of unitary representations
by Ciubotaru~\cite{Ciubotaru2025}.
Our contribution is to realize the specific case $n=8$,
$p=q=4$ \emph{inside} $\Cl(4,2)$ via the null-vector
construction, and to derive the universal identities and
their Lorentz-representation content in closed form.

\begin{revblock}
On the physics side, the use of Clifford and division
algebras as organizing structures for Standard Model
fermions has a substantial
literature: octonionic quark
models~\cite{GunaydinGursey1973}, division-algebraic and
$\Cl(6)$ constructions of the gauge
content~\cite{Furey2018,Stoica2018,GordingSchmidtMay2020},
the exceptional Jordan algebra
programme~\cite{TodorovDV2018,Boyle2020}, braid and
octonionic realizations~\cite{Gresnigt2018,Krasnov2022},
geometric-algebra gauge bases~\cite{TraylingBaylis2001},
and the spin-charge-family
approach~\cite{MankocBorstnik1993}. The present
construction differs from these in its point of departure:
the internal structure of
Sections~\ref{sec:entangle} and~\ref{sec:flavour} is built
on top of the conformal pair of a $(p,2)$ signature rather
than on a Euclidean $\Cl(n,0)$ or a division algebra, and
the rigidity results proved there are consequences of that
choice.
\end{revblock}

\medskip
\noindent\textbf{Outline.}
Section~\ref{sec:setup} establishes notation.
Section~\ref{sec:tower} constructs the doublet tower and
proves the universal nilpotency and projector identities.
Section~\ref{sec:sums} derives the contracted sums and their
connection to elementary symmetric polynomials.
Section~\ref{sec:lorentz} identifies the Lorentz
representations and the even/odd selection rule.
Section~\ref{sec:algebra} proves that the tower generates
$\slR(8,\R)$.
\rev{Section~\ref{sec:general} extends the entire package to
the conformal extension $\Cl(p+1,q+1)$ of an arbitrary
$\Cl(p,q)$. Section~\ref{sec:entangle} applies the top rung
of the tower to the Dirac versus Majorana dichotomy.
Section~\ref{sec:flavour} determines the flavour structure
of the family extensions $\Cl(4+2k,2)$.}
Section~\ref{sec:discussion} discusses the results.

%%% ====================================================================
%%% 2. NOTATION AND SETUP
%%% ====================================================================
\section{Notation and setup}\label{sec:setup}

Let $\Cl(4,2)$ be the Clifford algebra of $\R^{4,2}$
with orthonormal basis $\{\e{A}\}_{A=0}^5$ and signature
\begin{equation}\label{eq:signature}
\e{1}^2=\e{2}^2=\e{3}^2=\e{4}^2=+1,
\qquad
\e{0}^2=\e{5}^2=-1.
\end{equation}
By the classification of real Clifford
algebras~\cite{Lounesto2001},
$\Cl(4,2)\cong\R(8)$, the algebra of real $8\times 8$ matrices,
with minimal faithful module $S\cong\R^8$.

\begin{defn}[Null basis and dilatation]\label{def:null}
Define the null vectors
\begin{equation}\label{eq:null}
\no=\tfrac{1}{\sqrt{2}}(\e{0}+\e{4}),
\qquad
\ninf=\tfrac{1}{\sqrt{2}}(\e{0}-\e{4}),
\end{equation}
satisfying $\no^2=\ninf^2=0$ and the inner product $\no\cdot\ninf=-1$.
The \emph{dilatation} is
$D=\no\wedge\ninf=\e{4}\e{0}$, with $D^2=1$. We explicitly utilize the standard Lie commutator bracket $[A,B] = \frac{1}{2}(AB-BA)$ and anticommutator $\{A,B\} = AB+BA$ for Clifford elements.
The \emph{sector projectors} are
$\Pi_\pm=\tfrac12(1\pm D)$, decomposing
$S=S_+\oplus S_-$ with $\dim S_\pm=4$.
\end{defn}

We use the Lorentz index convention
$\mu,\nu\in\{1,2,3,5\}$ with
$\eta_{\mu\nu}=\mathrm{diag}(+1,+1,+1,-1)$.

\begin{defn}[Spectator blades]\label{def:spectator}
For $J=\{j_1,\ldots,j_k\}\subseteq\{1,2,3,5\}$ with
$j_1<\cdots<j_k$, define the \emph{spectator blade}
$e_J=\e{j_1}\cdots\e{j_k}$, with $e_\emptyset=1$.
The \emph{spectator metric} is
$\eta_J=\prod_{j\in J}\eta_{jj}$.
\end{defn}

\begin{rem}\label{rem:spectator-sq}
Since the $\e{j}$ are pairwise orthogonal,
$e_J^2=(-1)^{k(k-1)/2}\,\eta_J$.
Each spectator $\e{j}$ anticommutes with both $\e{0}$
and $\e{4}$ (and hence with $\no$ and $\ninf$), while
$D$ commutes with $e_J$: $[D,e_J]=0$ for all $J$.
\end{rem}

\begin{revblock}
The construction above is top-down: the null pair $\no,\ninf$
is assembled from an orthonormal basis of a pre-given quadratic
space. There is a complementary bottom-up tradition in which
null vectors are the primitive objects and the algebra is
generated from them. Correlated sets of nilpotents generate
Grassmann and Clifford geometric algebras directly, with the
metric structure emerging from their pairwise inner
products~\cite{SobczykCruzPage2026,Sobczyk2023,Sobczyk2023b}.
From that perspective the universal nilpotency of
Theorem~\ref{thm:nilpotent} below is not an accident of the
conformal embedding but the residue of the generating
nilpotents themselves, and the doublet tower can be read as
the statement that this generative nilpotency survives
dressing by every spectator blade.
\end{revblock}

%%% ====================================================================
%%% 3. THE DOUBLET TOWER
%%% ====================================================================
\section{The doublet tower}\label{sec:tower}

\begin{thm}[Null-basis representation of all
$\ad_D$-eigenvectors]\label{thm:tower}
For each $J\subseteq\{1,2,3,5\}$, define
\begin{equation}\label{eq:TpmJ}
T^+_J = \ninf  e_J,\qquad
T^-_J = \no e_J.
\end{equation}
Then:
\begin{enumerate}[label=\textup{(\alph*)}]
\item $T^\pm_J$ are $\ad_D$-eigenvectors:
$[D,T^+_J]=+T^+_J$ and $[D,T^-_J]=-T^-_J$.
\item $T^\pm_J$ Clifford-anticommute with $D$:
$\{D,T^\pm_J\}=0$.
\item The 32 elements $\{T^\pm_J\}$ as $J$ ranges over
all $2^4=16$ subsets of $\{1,2,3,5\}$ exhaust the
off-diagonal sector of $\Cl(4,2)$,
i.e.\ the set $\{X\in\Cl(4,2):\{D,X\}=0\}$.
\item For $|J|=1$ with $J=\{\mu\}$:
$T^+_{\{\mu\}}=-P_\mu$ and $T^-_{\{\mu\}}=-K_\mu$,
recovering the conformal translation and SCT generators.
\end{enumerate}
\end{thm}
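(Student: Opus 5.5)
The plan is to reduce everything to two elementary relations between $D$ and the null vectors, and then use the fact (Remark~\ref{rem:spectator-sq}) that $D$ commutes with every spectator blade $e_J$. First I will compute $D\ninf$ and $\ninf D$ directly from $D=\e{4}\e{0}$, $\e{0}^2=-1$, $\e{4}^2=+1$ and $\e{0}\e{4}=-\e{4}\e{0}$. One gets $\e{4}\e{0}\e{0}=-\e{4}$ and $\e{4}\e{0}\e{4}=-\e{0}$, so
\begin{equation*}
D\ninf=\ninf,\qquad \ninf D=-\ninf,\qquad D\no=-\no,\qquad \no D=\no .
\end{equation*}
In particular $\{D,\ninf\}=\{D,\no\}=0$.

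For (b), write $T^\pm_J=n_\bullet e_J$ and use $De_J=e_JD$. This gives
\begin{equation*}
\{D,n_\bullet e_J\}=Dn_\bullet e_J+n_\bullet e_J D=(Dn_\bullet+n_\bullet D)e_J=0 .
\end{equation*}
For (a), anticommutation lets me write $[D,T]=\tfrac12(DT-TD)=DT$. Then $DT^+_J=(D\ninf)e_J=T^+_J$ and $DT^-_J=(D\no)e_J=-T^-_J$, with the $\tfrac12$ convention of Definition~\ref{def:null} producing exactly eigenvalues $\pm1$. For (d), $\e{\mu}\perp\ninf$ gives $P_\mu=\e{\mu}\wedge\ninf=\e{\mu}\ninf=-\ninf\e{\mu}=-T^+_{\{\mu\}}$, and the same computation with $\no$ gives $K_\mu=-T^-_{\{\mu\}}$.

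Part (c) is the only step requiring an argument beyond direct computation, and I would handle it with a blade-counting argument rather than the matrix model. Every basis blade $e_A$ of $\Cl(4,2)$ either commutes or anticommutes with $D=\e{4}\e{0}$. Each of $\e{0},\e{4}$ anticommutes with $D$, while each spectator $\e{j}$ commutes with it. Hence a blade anticommutes with $D$ exactly when it contains precisely one of $\e{0},\e{4}$. Since the $64$ blades form a basis and the maps $X\mapsto DX\pm XD$ act diagonally on that basis up to scalars, the kernel $\{X:\{D,X\}=0\}$ is spanned by the anticommuting blades. These are, up to sign, $e_J\e{0}$ and $e_J\e{4}$ for $J\subseteq\{1,2,3,5\}$, which is $32$ linearly independent elements.

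Finally I will show that the $T^\pm_J$ span the same space. From $\ninf e_J=\tfrac1{\sqrt2}(\e{0}-\e{4})e_J$ and $\no e_J=\tfrac1{\sqrt2}(\e{0}+\e{4})e_J$, for each fixed $J$ the pair $(T^+_J,T^-_J)$ is obtained from $(\e{0}e_J,\e{4}e_J)$ by an invertible $2\times2$ matrix. Hence the $32$ elements $T^\pm_J$ form a basis of the off-diagonal sector. The main care needed here is bookkeeping signs and confirming that ``exhaust'' means ``span''. As a consistency check, the dimension $32=2\cdot4\cdot4$ matches the off-diagonal blocks of $\R(8)$ relative to $S_+\oplus S_-$.
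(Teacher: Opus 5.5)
Your proposal is correct. Parts (a), (b) and (d) match the paper's direct computations. The only cosmetic difference is in (a): you use the anticommutation to write $[D,T]=DT$, whereas the paper computes $[D,\ninf]=\ninf$ and then pulls $e_J$ through using $[D,e_J]=0$.

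For (c) your route differs in substance. The paper argues by a bare dimension count. It asserts that conjugation $X\mapsto DXD$ has $\pm1$ eigenspaces of equal dimension, so the off-diagonal sector is $32$-dimensional. It then says the $T^\pm_J$ are independent because they ``involve distinct basis blades''. That is not literally true, since $T^+_J$ and $T^-_J$ are built from the same two blades $\e{0}e_J$ and $\e{4}e_J$.

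Your argument fixes both points:
\begin{itemize}
\item Your blade classification (a blade anticommutes with $D$ iff it contains exactly one of $\e{0},\e{4}$) gives the dimension $32$ explicitly, instead of relying on an unproved equal-dimension claim.
\item Your invertible $2\times2$ change of basis from $(\e{0}e_J,\e{4}e_J)$ to $(T^+_J,T^-_J)$ is exactly what makes the independence claim correct.
\end{itemize}
The cost is a little more bookkeeping. The gain is a self-contained proof that also identifies the complementary Levi sector as the span of the commuting blades $e_J$ and $e_J D$. That is the basis used later in Theorem~\ref{thm:mixed}(c).

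One small wording fix. The map $X\mapsto DX+XD$ is not itself diagonal on blades: it sends a blade $B$ to $0$ or to $2DB$. What is diagonal is conjugation $X\mapsto DXD$, with eigenvalues $\pm1$. Since $DX+XD=D(X+DXD)$ and $D$ is invertible, the kernel is the $-1$ eigenspace of conjugation, which is spanned by the anticommuting blades. That is the conclusion you draw.
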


\begin{proof}
For~(a): since $[D,e_J]=0$
(Remark~\ref{rem:spectator-sq}) and
$[D,\ninf]=+\ninf$ (direct computation:
$D\ninf=\tfrac{1}{\sqrt2}\e{4}\e{0}(\e{0}-\e{4})
=\tfrac{1}{\sqrt2}(-\e{4}+\e{0})$ and
$\ninf D=\tfrac{1}{\sqrt2}(\e{0}-\e{4})\e{4}\e{0}
=\tfrac{1}{\sqrt2}(\e{4}-\e{0})$,
giving $[D,\ninf]=\tfrac12(D\ninf-\ninf D)
=\tfrac{1}{\sqrt2}(\e{0}-\e{4})=\ninf$),
we get $[D,T^+_J]=[D,\ninf]  e_J=\ninf  e_J=T^+_J$.
Similarly $[D,\no]=-\no$ gives $[D,T^-_J]=-T^-_J$.

For~(b): $\{D,\ninf\}=D\ninf+\ninf D=0$
(from the computation above), and since $D$ commutes with
$e_J$,
$\{D,T^+_J\}=\{D,\ninf\} e_J=0$.

For~(c): the off-diagonal sector has dimension~32
(since $D^2=1$ splits $\Cl(4,2)$ into $\pm1$ eigenspaces
of equal dimension under conjugation $X\mapsto DXD$).
The 32 elements $T^\pm_J$ are linearly independent (they
involve distinct basis blades) and are all off-diagonal
by~(b), hence they span the full off-diagonal sector.

For~(d):
$T^+_{\{\mu\}}=\ninf \e{\mu}
=-\e{\mu} \ninf=-\e{\mu}\wedge\ninf=-P_\mu$
(the wedge reduces to the Clifford product since
$\e{\mu} \cdot \ninf=0$).
\end{proof}

\begin{thm}[Universal nilpotency]\label{thm:nilpotent}
For every $J\subseteq\{1,2,3,5\}$:
\begin{equation}\label{eq:nil}
(T^+_J)^2=0,\qquad (T^-_J)^2=0.
\end{equation}
\end{thm}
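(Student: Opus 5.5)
The plan is a direct computation using only the anticommutation facts collected in Remark~\ref{rem:spectator-sq}. I will treat $T^+_J=\ninf e_J$ in full; the case $T^-_J=\no e_J$ is identical with $\no$ in place of $\ninf$, since only $\no^2=0$ and the anticommutation of the spectators with $\no$ will be used.

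First, I move $e_J$ past the null vector. Each $\e{j}$ with $j\in J$ anticommutes with $\e{0}$ and $\e{4}$, hence with $\ninf$. Commuting $\ninf$ through the $k=|J|$ factors of $e_J$ therefore gives
\begin{equation*}
e_J\,\ninf=(-1)^k\,\ninf\, e_J .
\end{equation*}

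Second, I expand the square and reassociate:
\begin{equation*}
(T^+_J)^2=\ninf\, e_J\,\ninf\, e_J=(-1)^k\,\ninf\,\ninf\, e_J\, e_J=(-1)^k\,\ninf^2\,e_J^2 .
\end{equation*}
Since $\ninf^2=0$ by Definition~\ref{def:null}, the whole expression vanishes. The factor $e_J^2=(-1)^{k(k-1)/2}\eta_J$ is a nonzero scalar, but its value is irrelevant here.

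There is no real obstacle in this argument. The only point needing care is the sign bookkeeping in the first step, and even that does not affect the conclusion, because the result is zero regardless of the sign. For completeness I will note the case $J=\emptyset$, where $T^+_\emptyset=\ninf$ and the claim is just $\ninf^2=0$. I will also observe that $\ninf^2=\tfrac12(\e{0}^2+\e{4}^2-\e{0}\e{4}-\e{4}\e{0})=\tfrac12(-1+1)=0$, so the proof does not depend on anything beyond the signature \eqref{eq:signature}.
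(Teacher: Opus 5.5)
Your proof is correct and matches the paper's argument: you commute $e_J$ past $\ninf$ (resp.\ $\no$) at the cost of $(-1)^k$, collect the factor $\ninf^2=0$ (resp.\ $\no^2=0$), and conclude. Your additional check that $\ninf^2=0$ follows from the signature alone is a harmless supplement.
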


\begin{proof}
Let $k=|J|$.
Each spectator $\e{j}$ anticommutes with $\ninf$, so
$e_J \ninf=(-1)^k\,\ninf e_J$.
Therefore
$(T^+_J)^2=(\ninf  e_J)(\ninf  e_J)
=(-1)^k\,\ninf^2  e_J^2=0$
since $\ninf^2=0$.
The proof for $T^-_J$ is identical with $\no^2=0$.
\end{proof}

\begin{thm}[Universal single-projector identity]
\label{thm:projector}
For every $J\subseteq\{1,2,3,5\}$ with $|J|=k$:
\begin{align}
T^+_J\,T^-_J
&= (-1)^{k(k+1)/2+1}\,\eta_J  2\Pi_+,
\label{eq:TpTm}\\
T^-_J\,T^+_J
&= (-1)^{k(k+1)/2+1}\,\eta_J  2\Pi_-.
\label{eq:TmTp}
\end{align}
\end{thm}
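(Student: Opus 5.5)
The plan is a direct computation in two steps: move the spectator blade past the null vector, then evaluate the product of the two null vectors in the orthonormal basis.

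\emph{Step 1: moving the blade.} Starting from $T^+_J T^-_J=\ninf e_J\,\no e_J$, I will move the middle $e_J$ past $\no$. By Remark~\ref{rem:spectator-sq}, each spectator $\e{j}$ anticommutes with $\no$, so $e_J\no=(-1)^k\no e_J$, exactly as in the proof of Theorem~\ref{thm:nilpotent}. This gives $T^+_JT^-_J=(-1)^k\,\ninf\no\,e_J^2$. Remark~\ref{rem:spectator-sq} also gives $e_J^2=(-1)^{k(k-1)/2}\eta_J$, so
\[
T^+_JT^-_J=(-1)^{k+k(k-1)/2}\,\eta_J\,\ninf\no=(-1)^{k(k+1)/2}\,\eta_J\,\ninf\no .
\]
The sign bookkeeping uses only $k+k(k-1)/2=k(k+1)/2$.

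\emph{Step 2: the null product.} I will compute $\ninf\no$ from Definition~\ref{def:null}:
\[
\ninf\no=\tfrac12(\e{0}-\e{4})(\e{0}+\e{4})=\tfrac12\bigl(\e{0}^2-\e{4}^2+\e{0}\e{4}-\e{4}\e{0}\bigr)=-1+\e{0}\e{4}.
\]
Since $\e{0}\e{4}=-\e{4}\e{0}=-D$, this equals $-(1+D)=-2\Pi_+$. Equivalently, $\ninf\no=\ninf\cdot\no+\ninf\wedge\no=-1-D$. Substituting into Step 1 gives \eqref{eq:TpTm}.

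\emph{The second identity.} For \eqref{eq:TmTp} the same blade manipulation gives $(-1)^{k(k+1)/2}\eta_J\,\no\ninf$. Here $\no\ninf=-1+D=-2\Pi_-$, either by the analogous expansion or by noting that $\{\no,\ninf\}=2\,\no\cdot\ninf=-2$.

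I do not expect a genuine obstacle. The only delicate point is the sign conventions: the orientation of $D=\e{4}\e{0}$ versus $\e{0}\e{4}$, and the sign of $\e{0}^2-\e{4}^2=-2$. I will double-check these against the verification of $[D,\ninf]=\ninf$ in Theorem~\ref{thm:tower}(a). As a sanity check, $k=0$ should give $\ninf\no=-2\Pi_+$.
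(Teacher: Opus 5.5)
Your proposal is correct and follows the paper's proof essentially verbatim. Both move $e_J$ past the null vector to pick up $(-1)^k$, then use $e_J^2=(-1)^{k(k-1)/2}\eta_J$ and $\ninf\no=-2\Pi_+$ (respectively $\no\ninf=-2\Pi_-$); your explicit orthonormal-basis expansion of the null products just fills in a step the paper states without computation.
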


\begin{proof}
Using $e_J \no=(-1)^k\,\no e_J$:
\begin{equation}
T^+_J\,T^-_J
=(\ninf  e_J)(\no  e_J)
=(-1)^k\,\ninf\,\no e_J^2.
\end{equation}
The null-vector product is
$\ninf\,\no=-1-D=-2\Pi_+$.
The spectator blade squares to
$e_J^2=(-1)^{k(k-1)/2}\,\eta_J$.
Combining:
\begin{equation}
T^+_J\,T^-_J
=(-1)^k (-2\Pi_+) (-1)^{k(k-1)/2}\,\eta_J
=(-1)^{k(k+1)/2+1}\,\eta_J  2\Pi_+.
\end{equation}
The proof of~\eqref{eq:TmTp} is identical with
$\no\,\ninf=-1+D=-2\Pi_-$.
\end{proof}

\begin{revblock}
The three theorems of this section are stated for $\Cl(4,2)$,
but their proofs use only $\no^2=\ninf^2=0$, the spectator
anticommutation with the null pair, $[D,e_J]=0$, and
$\ninf\no=-2\Pi_+$. None of these facts refers to the dimension
or the signature of the Lorentz sector. Section~\ref{sec:general}
states the resulting generalization to every conformal extension
$\Cl(p+1,q+1)$, and Section~\ref{sec:entangle} shows that the two
combinations of the top rung $J=\{1,2,3,5\}$ are precisely the two
rank-five pseudoscalars of $\Cl(4,2)$, so that
Theorem~\ref{thm:projector} computes the Dirac versus Majorana
dichotomy in one line.
\end{revblock}

%%% ====================================================================
%%% 4. CONTRACTED SUMS AND SYMMETRIC POLYNOMIALS
%%% ====================================================================
\section{Contracted sums and elementary symmetric
polynomials}\label{sec:sums}

\begin{defn}\label{def:Sk}
Let $S_k$ denote the $k$-th elementary symmetric polynomial
of the Lorentz signature values
$(\eta_{11},\eta_{22},\eta_{33},\eta_{55})=(+1,+1,+1,-1)$:
\begin{equation}
S_k=\sum_{\substack{J\subseteq\{1,2,3,5\}\\|J|=k}}\eta_J.
\end{equation}
\end{defn}

The explicit values, determined by the generating function
$\prod_\mu(1+\eta_{\mu\mu}\,t)=(1+t)^3(1-t)=1+2t-2t^3-t^4$,
are:
\begin{equation}\label{eq:Sk}
S_0=1,\quad S_1=2,\quad S_2=0,\quad S_3=-2,\quad S_4=-1.
\end{equation}

\begin{thm}[Contracted sums]\label{thm:sums}
The unweighted sum of cross-terms over all doublets with
$k$ spectators is
\begin{equation}\label{eq:sum-formula}
\sum_{|J|=k}T^+_J\,T^-_J
=(-1)^{k(k+1)/2+1}\,S_k 2\Pi_+.
\end{equation}
The values are:
\begin{center}
\begin{tabular}{ccrcc}
\toprule
\textbf{Grade} & $k$ & $S_k$
& \textbf{Doublets}
& $\sum T^+T^-$\\
\midrule
$1$ & $0$ & $+1$ & $1$ & $-2\Pi_+$\\
$2$ & $1$ & $+2$ & $4$ & $+4\Pi_+$\\
$3$ & $2$ & $0$ & $6$ & $0$\\
$4$ & $3$ & $-2$ & $4$ & $+4\Pi_+$\\
$5$ & $4$ & $-1$ & $1$ & $+2\Pi_+$\\
\bottomrule
\end{tabular}
\end{center}
\end{thm}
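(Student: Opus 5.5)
The plan is to sum the single-projector identity of Theorem~\ref{thm:projector} over all $J$ with $|J|=k$. For every such $J$, \eqref{eq:TpTm} gives $T^+_J T^-_J=(-1)^{k(k+1)/2+1}\eta_J\,2\Pi_+$. The sign $(-1)^{k(k+1)/2+1}$ and the projector $2\Pi_+$ depend only on $k$, not on which $J$ is chosen. They therefore factor out of the sum, leaving $\sum_{|J|=k}\eta_J$, which is exactly $S_k$ by Definition~\ref{def:Sk}. This proves \eqref{eq:sum-formula} at once. No further Clifford computation is needed, because all the algebra sits inside Theorem~\ref{thm:projector}: nilpotency of the null pair, spectator anticommutation, and $\ninf\no=-2\Pi_+$.

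For the table, I would first confirm the $S_k$ from the generating function. Since $\prod_\mu(1+\eta_{\mu\mu}t)=\sum_k S_k t^k$ and $(1+t)^3(1-t)=(1+3t+3t^2+t^3)(1-t)=1+2t+0\cdot t^2-2t^3-t^4$, we get \eqref{eq:Sk}. The doublet counts are the binomial coefficients $\binom{4}{k}=1,4,6,4,1$.

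Next I would tabulate the signs $(-1)^{k(k+1)/2+1}$ for $k=0,\dots,4$. The exponents $k(k+1)/2+1$ are $1,2,4,7,11$, so the signs are $-,+,+,-,-$. Multiplying sign, $S_k$ and $2\Pi_+$ gives, row by row:
\begin{itemize}
\item $k=0$: $(-1)(1)(2\Pi_+)=-2\Pi_+$;
\item $k=1$: $(+1)(2)(2\Pi_+)=+4\Pi_+$;
\item $k=2$: $0$;
\item $k=3$: $(-1)(-2)(2\Pi_+)=+4\Pi_+$;
\item $k=4$: $(-1)(-1)(2\Pi_+)=+2\Pi_+$.
\end{itemize}
These match the stated table. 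The "Grade" column is $k+1$, the Clifford grade of $n_\bullet e_J$.

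There is no real obstacle. The only point that needs care is the sign bookkeeping in $(-1)^{k(k+1)/2+1}$, and I would cross-check it in one or two cases directly, e.g.\ $k=0$, where $\ninf\no=-2\Pi_+$ is immediate.
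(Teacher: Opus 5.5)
Your proposal is correct and is the paper's own argument: the sign $(-1)^{k(k+1)/2+1}$ and the factor $2\Pi_+$ depend only on $k$, so summing Theorem~\ref{thm:projector} over $|J|=k$ leaves $\sum_{|J|=k}\eta_J=S_k$. Your tabulation of the signs ($-,+,+,-,-$) and the generating-function check of the $S_k$ just write out the arithmetic that the paper's one-line proof leaves implicit.
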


\begin{proof}
Immediate from Theorem~\ref{thm:projector} by summing
$(-1)^{k(k+1)/2+1}\eta_J$ over all $J$ with $|J|=k$.
\end{proof}

\begin{cor}[Grade-3 cancellation]\label{cor:grade3}
The contracted sum vanishes at grade~$3$:
$\sum_{|J|=2}T^+_J\,T^-_J=0$.
This is a direct consequence of $S_2=0$.
More generally, for signature $(p,q)$ in dimension
$d=p+q$, one has
$S_2=\binom{p}{2}+\binom{q}{2}-pq=\tfrac{(p-q)^2-d}{2}$.
Thus $S_2=0\iff(p-q)^2=d$.
In $d=4$ this selects precisely the Lorentzian cases
$(p,q)=(3,1)$ and $(1,3)$.
\rev{Two sharpenings follow. First, within the Lorentzian
signatures $(d-1,1)$, equivalently $(1,d-1)$, the condition
$(p-q)^2=d$ reads $(d-2)^2=d$, whose only solutions are the
trivial $d=1$ and $d=4$: four-dimensional spacetime is the
unique nontrivial Lorentzian solution in any dimension.
Second, the full solution set of $(p-q)^2=p+q$ is the family
$(p,q)=\bigl(\tfrac{m^2+m}{2},\tfrac{m^2-m}{2}\bigr)$
parametrized by $m=p-q$, so the cancellation occurs only in
perfect-square dimensions $d=m^2$: the signatures $(3,1)$,
$(6,3)$, $(10,6)$, $(15,10)$, and so on.}
\end{cor}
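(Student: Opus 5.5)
The first claim follows by substituting $k=2$ into Theorem~\ref{thm:sums}: the right-hand side of \eqref{eq:sum-formula} is $(-1)^{4}\,S_2\,2\Pi_+ = 2S_2\Pi_+$. From the generating function $(1+t)^3(1-t)=1+2t-2t^3-t^4$, the coefficient of $t^2$ is $S_2=0$, so the grade-3 sum vanishes. As an independent check I would enumerate the six pairs directly. The pairs inside $\{1,2,3\}$ contribute $\eta_J=+1$ three times, and the pairs $\{j,5\}$ contribute $-1$ three times, giving $S_2=3-3=0$.

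For a general signature $(p,q)$ I would compute $S_2=\sum_{|J|=2}\eta_J$ by cases on the location of the pair $J$:
\begin{itemize}
\item both indices positive: $\binom p2$ pairs, each with $\eta_J=+1$;
\item both indices negative: $\binom q2$ pairs, each with $\eta_J=(-1)^2=+1$;
\item mixed: $pq$ pairs, each with $\eta_J=-1$.
\end{itemize}
This gives $S_2=\binom p2+\binom q2-pq$. The same value is the $t^2$ coefficient of $(1+t)^p(1-t)^q$. For the closed form I would use $\tfrac12\bigl[(p-q)^2-(p+q)\bigr]=\tfrac12(p^2-p)+\tfrac12(q^2-q)-pq$, which is the same expression. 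Hence $S_2=0\iff(p-q)^2=d$. For $d=4$ this forces $p-q=\pm2$, so $(p,q)=(3,1)$ or $(1,3)$.

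For the sharpenings:
\begin{itemize}
\item In Lorentzian signature $(d-1,1)$ we have $(p-q)^2=(d-2)^2$, so the condition becomes $d^2-5d+4=0$, i.e.\ $(d-1)(d-4)=0$. The signature $(1,d-1)$ gives the same equation. Hence $d\in\{1,4\}$.
\item For the full solution set, put $m=p-q$. Then $d=m^2$, and solving $p+q=m^2$, $p-q=m$ gives $p=(m^2+m)/2$ and $q=(m^2-m)/2$. These are integers because $m^2\pm m$ is even, and they are nonnegative for every integer $m$; negative $m$ just swaps $p$ and $q$. Taking $m=2,3,4,5$ lists $(3,1),(6,3),(10,6),(15,10)$.
\end{itemize}

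There is no real obstacle here; the only care needed is the bookkeeping of the algebraic identity $\binom p2+\binom q2-pq=\tfrac{(p-q)^2-d}{2}$ and the integrality remark in the parametrization.
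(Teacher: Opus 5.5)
Your proposal is correct and follows the paper's route. The vanishing is read off from Theorem~\ref{thm:sums} at $k=2$ with $S_2=0$, and the general formula $\binom p2+\binom q2-pq=\tfrac{(p-q)^2-d}{2}$, the Lorentzian equation $(d-1)(d-4)=0$, and the parametrization by $m=p-q$ are exactly the elementary algebra the paper asserts inline, with your case count and integrality check supplying the details it leaves implicit.
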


\begin{rem}\label{rem:signature-fingerprint}
The grade-3 cancellation is specific to the $(3,1)$
Lorentz signature and would not occur in Euclidean
$(4,0)$ or split $(2,2)$ signature.
The elementary symmetric polynomials $S_k$ thus serve
as algebraic fingerprints of the spacetime signature
within $\Cl(4,2)$.
\end{rem}

\begin{rem}[Context in Massive Gravity]\label{rem:massive}
While the appearance of the elementary symmetric polynomials $S_k$ as algebraic fingerprints of the $(3,1)$ signature in $\Cl(4,2)$ is a novel Clifford-algebraic result, we note a  parallel in modern gravitational physics. In dRGT massive gravity and bimetric relativity (see, e.g., 
\cite{derham2011, Hassan2011}), the elementary symmetric polynomials evaluated on the matrix square root $\sqrt{g^{-1}\eta}$ are precisely the structures required to construct ghost-free interaction potentials. This suggests a broader physical principle where symmetric polynomials of metric components may act as fundamental constraints on allowable dynamical theories.
\end{rem}

\begin{cor}[Grand total]\label{cor:total}
The sum over all 16 doublets gives
\begin{equation}\label{eq:grand}
\sum_{J\subseteq\{1,2,3,5\}}T^+_J\,T^-_J=8\Pi_+,
\qquad
\sum_{J}T^-_J\,T^+_J=8\Pi_-,
\end{equation}
and the symmetrized sum equals $\dim(S)$:
\begin{equation}\label{eq:dim}
\sum_{J}(T^+_J\,T^-_J+T^-_J\,T^+_J)=8.
\end{equation}
\end{cor}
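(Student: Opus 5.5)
The plan is to sum the graded identity of Theorem~\ref{thm:sums} over $k=0,\dots,4$, and then obtain the $\Pi_-$ version by the same route from \eqref{eq:TmTp}. Write $\sigma_k=(-1)^{k(k+1)/2+1}$. For $k=0,1,2,3,4$ this gives $\sigma_k=-1,+1,+1,-1,-1$. Then
\begin{equation*}
\sum_{J}T^+_J T^-_J=\Bigl(\sum_{k=0}^{4}\sigma_k S_k\Bigr)2\Pi_+ .
\end{equation*}
Inserting the values \eqref{eq:Sk}, the coefficient is $-1+2+0+2+1=4$, so the sum is $8\Pi_+$. This agrees with the last column of the table in Theorem~\ref{thm:sums}, whose entries $-2,4,0,4,2$ (times $\Pi_+$) add to $8\Pi_+$.

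For the second identity in \eqref{eq:grand}, Theorem~\ref{thm:projector} gives $T^-_JT^+_J=\sigma_{|J|}\eta_J\,2\Pi_-$, with exactly the same scalar coefficient as $T^+_JT^-_J$ and only $\Pi_+$ replaced by $\Pi_-$. Summing over $J$ as before therefore yields $\bigl(\sum_k\sigma_kS_k\bigr)2\Pi_-=8\Pi_-$.

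The dimension identity \eqref{eq:dim} follows by adding the two sums: $8\Pi_++8\Pi_-=8(\Pi_++\Pi_-)=8\cdot 1=8=\dim S$, using $\Pi_++\Pi_-=1$ from Definition~\ref{def:null}.

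The only step needing care is the sign bookkeeping for $\sigma_k$. It enters as a fixed multiplier on each $S_k$, so there are no cross-terms to handle. As a cross-check I would recompute the coefficient as a generating function evaluated at complex points: since $\sigma_k$ is $4$-periodic, $\sigma_k=\tfrac12\bigl((1+i)i^k+(1-i)(-i)^k\bigr)$, so $\sum_k\sigma_kS_k=\tfrac12\bigl((1+i)F(i)+(1-i)F(-i)\bigr)$ with $F(t)=(1+t)^3(1-t)$. Here $F(i)=4$ and $F(-i)=4$, which again gives $4$.
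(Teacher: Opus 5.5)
Your main argument is correct and is essentially the paper's: sum the graded coefficients $\sigma_kS_k$ from Theorem~\ref{thm:sums} to get $4\cdot 2\Pi_+=8\Pi_+$ (and likewise $8\Pi_-$), then use $\Pi_++\Pi_-=1$; the paper applies $\Pi_++\Pi_-=1$ doublet by doublet before summing rather than after, which makes no difference. Your optional cross-check, however, contains two slips that cancel each other: $\tfrac12\bigl((1+i)i^k+(1-i)(-i)^k\bigr)$ equals $(-1)^{k(k+1)/2}=-\sigma_k$, and $F(\pm i)=\pm 4i$ rather than $4$; done correctly, it reads $\sum_k\sigma_kS_k=-\mathrm{Re}\bigl((1+i)F(i)\bigr)=-\mathrm{Re}(4i-4)=4$.
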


\begin{proof}
From Theorem~\ref{thm:sums}, the unweighted coefficients
at $k=0,1,2,3,4$ are $-2,+4,0,+4,+2$, summing to
$-2+4+0+4+2=8$.
For~\eqref{eq:dim}: since $\Pi_++\Pi_-=1$,
each doublet contributes
$T^+_J T^-_J+T^-_J T^+_J
=(-1)^{k(k+1)/2+1}\eta_J 2(\Pi_++\Pi_-)
=(-1)^{k(k+1)/2+1}\eta_J 2$,
and summing over all $J$ gives
$2\sum_J(-1)^{k(k+1)/2+1}\eta_J=8$.
\end{proof}

%%% ====================================================================
%%% 5. LORENTZ REPRESENTATIONS AND EVEN/ODD SELECTION RULE
%%% ====================================================================
\section{Lorentz representations and the even/odd
selection rule}\label{sec:lorentz}

The Lorentz generators $M_{\mu\nu}=\e{\mu}\e{\nu}$ commute
with $D$ and therefore preserve the $\ad_D$-eigenvalue:
$[M_{\mu\nu},T^\pm_J]$ is again an $\ad_D$-eigenvector
with the same eigenvalue.
The doublets at each grade thus form a representation
of $\so(3,1)$.

\begin{notn}\label{not:lorentz-generators}
With the commutator convention
$[A,B]=\tfrac12(AB-BA)$, define the rotation and
boost generators
\begin{equation}\label{eq:JK-def}
J_1=[{\e{2}},{\e{3}}]=\tfrac12\e{2}\e{3},\quad
J_2=-[{\e{1}},{\e{3}}]=-\tfrac12\e{1}\e{3},\quad
J_3=[{\e{1}},{\e{2}}]=\tfrac12\e{1}\e{2},
\end{equation}
\begin{equation}
K_i=[{\e{i}},{\e{5}}]=\tfrac12\e{i}\e{5}\quad(i=1,2,3),
\end{equation}
and the quadratic Casimir operators
\begin{equation}\label{eq:casimir-def}
C_2=\mathbf{J}^2-\mathbf{K}^2
=\sum_{i=1}^{3}(J_i^2-K_i^2),\qquad
W=\mathbf{J}\cdot\mathbf{K}=\sum_{i=1}^{3}J_i K_i,
\end{equation}
where the squares and products use the same commutator
bracket (i.e.\ $J_i^2$ means $J_i$ composed with $J_i$ as
a linear map on the doublet space via $X\mapsto[J_i,[J_i,X]]$).
The standard relation to $(j_+,j_-)$ labels is
$C_2=-2[j_+(j_++1)+j_-(j_-+1)]$ and
$W=-[j_+(j_++1)-j_-(j_-+1)]$ in this convention.
\end{notn}

\begin{prop}[Lorentz content of the doublet tower]
\label{prop:lorentz}
Using the standard rotation/boost decomposition and the
Casimir operators
$C_2=\mathbf{J}^2-\mathbf{K}^2$ and
$W=\mathbf{J}\cdot\mathbf{K}$, the representations at
each grade are:
\begin{center}
\begin{tabular}{cccl}
\toprule
$k$ & \textbf{Dim}
& $\so(3,1)$ \textbf{rep}
& \textbf{Physical name}\\
\midrule
$0$ & $1$ & $(0,0)$ & Scalar\\
$1$ & $4$ & $(\tfrac12,\tfrac12)$
& $4$-vector\\
$2$ & $6$ & $(1,0)\oplus(0,1)$
& Antisymmetric $2$-tensor\\
$3$ & $4$ & $(\tfrac12,\tfrac12)$
& $4$-pseudovector\\
$4$ & $1$ & $(0,0)$ & Pseudoscalar\\
\bottomrule
\end{tabular}
\end{center}
\end{prop}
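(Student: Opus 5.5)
We reduce the adjoint action on doublets to the action of $\so(3,1)$ on the spectator blades $e_J$, and then read off the standard exterior-algebra decomposition. The doublets carry a sign $\pm$, but the content is the same for $T^+_J$ and $T^-_J$, so we treat $T^+_J=\ninf e_J$.

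\textbf{Step 1: reduce to the blades.} The generators $J_i$ and $K_i$ are bivectors $\tfrac12\e{\mu}\e{\nu}$ with $\mu,\nu\in\{1,2,3,5\}$. Each such bivector commutes with $\ninf$, since $\ninf$ anticommutes with each of $\e{\mu}$ and $\e{\nu}$. Therefore
\[
[M,\ninf e_J]=\ninf\,[M,e_J]
\]
for any such bivector $M$. The map $e_J\mapsto \ninf e_J$ is injective, so it is an $\so(3,1)$-module isomorphism from $\Lambda^k\R^{3,1}$ (spanned by the blades $e_J$ with $|J|=k$) onto the grade-$k$ doublets. The bracket $[M,\cdot]$ is a derivation that preserves vector grade. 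It acts on vectors as $[\tfrac12\e{\mu}\e{\nu},\e{\rho}]=\tfrac12(\e{\mu}\e{\nu}\e{\rho}-\e{\rho}\e{\mu}\e{\nu})$, which is the standard $\so(3,1)$ action on $\R^{3,1}$. Hence its action on $\Lambda^k$ is the induced exterior-power action.

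\textbf{Step 2: identify each exterior power.}
\begin{itemize}[label=$\cdot$]
\item For $k=0$ the space is the trivial representation.
\item For $k=1$ it is the vector representation $(\tfrac12,\tfrac12)$.
\item For $k=2$ it is the adjoint representation. Here $\so(3,1)_\C\cong\slR(2,\C)\oplus\slR(2,\C)$ gives $(1,0)\oplus(0,1)$.
\item For $k=3,4$ we use Hodge duality. Multiplication by the Lorentz pseudoscalar $e_{1235}$ commutes with every bivector and maps $\Lambda^k\to\Lambda^{4-k}$. It therefore intertwines $k=3$ with $k=1$ and $k=4$ with $k=0$. This gives a pseudovector and a pseudoscalar, distinguished from $k=1,0$ only by parity.
\end{itemize}

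\textbf{Step 3: cross-check with the Casimirs.} For consistency with Notation~\ref{not:lorentz-generators}, we verify the labels by evaluating $C_2$ and $W$ on one representative per grade. At $k=0$ we get $C_2=W=0$. At $k=1$ we get $C_2=-3$, matching $-2(\tfrac34+\tfrac34)$, and $W=0$. At $k=2$ the self-dual and anti-self-dual combinations $\e{1}\e{2}\pm \e{3}\e{5}$ (suitably normalized) should give $C_2=-4$ and $W=\mp2$. The $k=2$ Casimirs are the main obstacle. The signs of $W$ depend on the factor $\tfrac12$ in the bracket and on the nonstandard sign of $J_2$, and the duality $\ast$ squares to $-1$ in Lorentzian signature. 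Because of that last fact, the splitting into $(1,0)\oplus(0,1)$ exists only over $\C$, where the $\pm i$ eigenspaces of $\ast$ are the two summands. We would therefore carry out this step by computing $[J_3,[J_3,\cdot]]$ and $[J_3,[K_3,\cdot]]$ explicitly on $\e{1}\e{2}$ and $\e{3}\e{5}$, and extend to the other components by rotational symmetry.
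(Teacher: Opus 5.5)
Your proof is correct, and for $k=1,2$ it takes a genuinely different route from the paper. The paper's proof is computational. It builds the representation matrices of $J_i,K_i$ on the grade-$k$ doublets, evaluates $C_2$ and $W$ at $k=1$ and $k=2$, and reads off $(\tfrac12,\tfrac12)$ and $(1,0)\oplus(0,1)$ from those numbers. It then obtains $k=3,4$ by Hodge duality, exactly as in your Step~2. You replace the Casimir evaluation by the observation that $x\mapsto\ninf x$ is an $\so(3,1)$-intertwiner from $\Lambda^k\R^{3,1}$ onto the grade-$k$ doublets. This holds because Lorentz bivectors commute with $\ninf$ and $\no$, and $[M,\cdot]$ is a grade-preserving derivation. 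The labels are then the textbook decompositions of the trivial, vector and adjoint representations. Your route explains why the $T^+$ and $T^-$ towers carry identical content and why the sequence is palindromic, and it does not depend on how the generators are normalized. The paper's route instead produces the explicit Casimir values that the statement's wording refers to, at the price of depending on conventions.

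Step~3 is therefore unnecessary, and as written it would not check out. Take real generators normalized so that $C_2=-3$ on vectors. This means $J_3=\e{1}\e{2}$, which is what $[\e{1},\e{2}]$ actually equals under $[A,B]=\tfrac12(AB-BA)$; the literal $J_3=\tfrac12\e{1}\e{2}$ would give $C_2=-\tfrac34$. With this normalization, a direct computation gives $W(\e{1}\e{2})=-2\,\e{3}\e{5}$ and $W(\e{3}\e{5})=2\,\e{1}\e{2}$. So $W$ acts on $\Lambda^2$ as multiplication by $2I_L$, and the real combinations $\e{1}\e{2}\pm\e{3}\e{5}$ are not eigenvectors at all. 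The eigenvectors are $\e{1}\e{2}\pm i\,\e{3}\e{5}$, with eigenvalues $\pm2i$. Real values $\pm2$, as quoted in the paper, arise only after passing to Hermitian generators $i\mathbf{J}$, $i\mathbf{K}$. Your vector-action formula also drops a factor of two: under the paper's bracket, $[\tfrac12\e{\mu}\e{\nu},\e{\rho}]=\tfrac14(\e{\mu}\e{\nu}\e{\rho}-\e{\rho}\e{\mu}\e{\nu})$. None of this affects Steps~1--2, which already constitute a complete proof.
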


\begin{proof}
Verified computationally by constructing the representation
matrices and computing $C_2$ and $W$.
For $k=1$: $C_2=-3$ and $W=0$, matching
$(\tfrac12,\tfrac12)$.
For $k=2$: $C_2=-4$ and $W$ has eigenvalues $\pm2$ with
multiplicity~3, matching $(1,0)\oplus(0,1)$.
The $k=3$ and $k=4$ cases follow by Hodge duality.
\end{proof}

\begin{prop}[Hodge duality]\label{prop:hodge}
The Lorentz $4$-volume element
$I_L=\e{1}\e{2}\e{3}\e{5}$ implements a duality between
grades $k$ and $4-k$:
\begin{equation}\label{eq:hodge}
T^\pm_J=\epsilon_J\,I_L T^\pm_{\bar{J}},
\end{equation}
where $\bar{J}=\{1,2,3,5\}\setminus J$ is the complement
and $\epsilon_J=\pm1$ is a sign determined by the
Levi-Civita symbol and the metric.
The doublet tower is palindromic:
$(0,0)$, $(\tfrac12,\tfrac12)$,
$(1,0)\oplus(0,1)$, $(\tfrac12,\tfrac12)$, $(0,0)$.
\end{prop}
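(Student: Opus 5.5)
The plan is to prove the duality relation by direct blade manipulation and then deduce palindromicity from the fact that multiplication by $I_L$ intertwines the Lorentz action. First, for $J\subseteq\{1,2,3,5\}$ with complement $\bar J$, I will compute $I_L e_{\bar J}$. The blade $e_{\bar J}$ is a product of distinct Lorentz basis vectors. In $I_L=\e{1}\e{2}\e{3}\e{5}$, I will move each factor of $e_{\bar J}$ leftward to meet its partner. Each such factor squares to $\eta_{jj}$, and reordering only introduces signs, because distinct basis vectors anticommute. This gives
\begin{equation*}
I_L e_{\bar J}=\sigma_J\,\eta_{\bar J}\,e_J,
\end{equation*}
where $\sigma_J=\pm1$ is the sign of the permutation (Levi-Civita symbol) that sorts $(J,\bar J)$ into increasing order, adjusted by the reversal sign $(-1)^{|\bar J|(|\bar J|-1)/2}$ from pairing off the $\bar J$ factors.

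Next comes the null-vector factor. Each $\e{j}$ anticommutes with $\ninf$ and $\no$ (Remark~\ref{rem:spectator-sq}). Hence $I_L$, being a product of four spectators, commutes with both null vectors. Therefore
\begin{equation*}
I_L T^\pm_{\bar J}=I_L n_\bullet e_{\bar J}=n_\bullet I_L e_{\bar J}=\sigma_J\eta_{\bar J}\,T^\pm_J .
\end{equation*}
Setting $\epsilon_J=\sigma_J\eta_{\bar J}$, which equals its own inverse, yields $T^\pm_J=\epsilon_J I_L T^\pm_{\bar J}$. This is exactly the claimed form, with the sign fixed by the Levi-Civita symbol and the metric.

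For palindromicity, I will show that left multiplication $L_{I_L}:X\mapsto I_L X$ is a Lorentz intertwiner. Each $M_{\mu\nu}=\e{\mu}\e{\nu}$ commutes with $I_L$, since $I_L$ is central in the even subalgebra of $\Cl(3,1)$. Hence
\begin{equation*}
[M_{\mu\nu},I_L X]=I_L[M_{\mu\nu},X].
\end{equation*}
By the first step, $L_{I_L}$ maps the grade-$(4-k)$ doublet space bijectively onto the grade-$k$ doublet space. So these spaces are isomorphic $\so(3,1)$-modules. Combined with the $k=0,1,2$ entries of Proposition~\ref{prop:lorentz}, this gives the sequence $(0,0),(\tfrac12,\tfrac12),(1,0)\oplus(0,1),(\tfrac12,\tfrac12),(0,0)$. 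It also justifies the $k=3,4$ entries, which Proposition~\ref{prop:lorentz} obtains by appeal to Hodge duality.

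The main obstacle is bookkeeping rather than conceptual: pinning down $\epsilon_J$ explicitly and consistently. I will handle it by stating $\epsilon_J$ as a product of the permutation sign and $\eta_{\bar J}$, and checking one case, e.g.\ $J=\{1\}$, directly. One subtlety needs care. At $k=2$ the module $(1,0)\oplus(0,1)$ is mapped to itself, and $L_{I_L}$ does not swap the summands. Multiplication by a pseudoscalar commutes with the Lorentz action, so it acts within each summand as a scalar. Since $I_L^2=-1$ in signature $(3,1)$, that scalar is $\pm i$ after complexification; it does not exchange $(1,0)$ and $(0,1)$. This is consistent with palindromicity, which concerns only the isomorphism between grades $k$ and $4-k$.
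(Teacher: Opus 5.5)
Your proposal is correct and follows the route the paper has in mind. The paper gives no separate proof of Proposition~\ref{prop:hodge}; its appeal to Hodge duality in Proposition~\ref{prop:lorentz} rests on the two facts you use: $I_L$ commutes with $\no,\ninf$, so $I_L T^\pm_{\bar J}=\pm T^\pm_J$, and $I_L$ commutes with every $M_{\mu\nu}$, so left multiplication by $I_L$ is a Lorentz isomorphism from grade $4-k$ to grade $k$. Your proposal also supplies the explicit sign $\epsilon_J=\sigma_J\eta_{\bar J}$ and avoids circularity by using only the $k=0,1,2$ entries of Proposition~\ref{prop:lorentz}, details the paper leaves implicit.
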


\begin{thm}[Even/odd selection rule]
\label{thm:even-odd}
Let $\Cl^\pm(4,2)$ denote the even and odd subspaces
under the main involution.
The doublets at Clifford grade $k+1$ belong to
$\Cl^+(4,2)$ when $k$ is odd and to $\Cl^-(4,2)$ when
$k$ is even.
The contracted sums over these two classes satisfy:
\begin{align}
\Cl^+\text{ doublets }(k=1,3):&\quad
\sum_{\substack{J:\\|J|\in\{1,3\}}}
T^+_J\,T^-_J=8\Pi_+,
\label{eq:even}\\
\Cl^-\text{ doublets }(k=0,2,4):&\quad
\sum_{\substack{J:\\|J|\in\{0,2,4\}}}
T^+_J\,T^-_J=0.
\label{eq:odd}
\end{align}
The entire nonzero unweighted contribution to the doublet
contraction comes from the even subalgebra
$\Cl^+(4,2)\cong\Cl(4,1)$.
\end{thm}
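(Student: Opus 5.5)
The statement splits into a parity claim and two sum identities, and I would prove them in that order.

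\textbf{Parity.} The blade $T^\pm_J=n_\bullet e_J$ is a product of one null vector and $k$ spectator vectors. Since $n_\bullet$ is a linear combination of $\e{0}$ and $\e{4}$, both orthogonal to and distinct from every $\e{j}$ with $j\in J$, the product $n_\bullet e_J$ is a sum of two basis blades of Clifford grade exactly $k+1$. The main involution acts on a grade-$(k+1)$ blade by $(-1)^{k+1}$. Hence $T^\pm_J\in\Cl^+(4,2)$ exactly when $k$ is odd, and $T^\pm_J\in\Cl^-(4,2)$ exactly when $k$ is even.

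\textbf{Sum identities.} I would invoke Theorem~\ref{thm:sums} and add up the grade-wise contributions recorded in its table.

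For the even class, $k=1,3$, the contributions are $+4\Pi_+$ and $+4\Pi_+$. Directly, $(-1)^{k(k+1)/2+1}S_k\cdot 2$ gives $(+1)(2)(2)=4$ at $k=1$ and $(+1)(-2)(2)=-4$ at $k=3$. I would recheck this sign at $k=3$: the exponent is $6+1=7$, so the sign is $-1$, and the term is $(-1)(-2)(2)=+4$. Thus \eqref{eq:even} sums to $8\Pi_+$.

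For the odd class, $k=0,2,4$, the contributions are $-2\Pi_+$, $0$ and $+2\Pi_+$, and these cancel. The vanishing at $k=2$ is Corollary~\ref{cor:grade3}, i.e.\ $S_2=0$.

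As a consistency check, the two classes together reproduce Corollary~\ref{cor:total}: $8\Pi_+ + 0 = 8\Pi_+$.

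\textbf{Identification of the even subalgebra.} The closing sentence asserts $\Cl^+(4,2)\cong\Cl(4,1)$. I would quote this from the standard isomorphism $\Cl^+(p,q)\cong\Cl(p,q-1)$, valid for $q\ge1$, by choosing the negative-square generator $\e{5}$ and mapping $\e{a}\mapsto \e{a}\e{5}$ for $a\neq5$. Each image squares to $-\e{a}^2\e{5}^2=\e{a}^2$, since $\e{5}^2=-1$. So $\e{1},\dots,\e{4}$ map to squares $+1$ and $\e{0}$ maps to square $-1$, which gives signature $(4,1)$. These five images anticommute pairwise and generate the even subalgebra, so $\Cl^+(4,2)\cong\Cl(4,1)$.

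\textbf{Main obstacle.} There is no real obstacle. The only care needed is bookkeeping of the sign $(-1)^{k(k+1)/2+1}$, in particular at $k=3$ and $k=4$, where sign slips would destroy both the cancellation in \eqref{eq:odd} and the total in \eqref{eq:even}.
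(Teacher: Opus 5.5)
Your proposal is correct and follows the paper's proof, which likewise just adds the grade-wise values $-2,+4,0,+4,+2$ from Theorem~\ref{thm:sums} within each parity class; your parity argument and your explicit isomorphism $\Cl^+(4,2)\cong\Cl(4,1)$ via $\e{a}\mapsto\e{a}\e{5}$ supply points the paper leaves implicit (complete the latter with the dimension count $32=32$ to get injectivity). Delete the erroneous first value $-4$ at $k=3$, which you already correct to $+4$ in the next line.
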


\begin{proof}
Even total: $+4+4=+8$ from $k=1$ and $k=3$
(Theorem~\ref{thm:sums}).
Odd total: $-2+0+2=0$ from $k=0$, $k=2$, and $k=4$.
The $k=0$ and $k=4$ contributions cancel
($-2+2=0$), and $k=2$ vanishes independently
by $S_2=0$.
\end{proof}

\begin{rem}\label{rem:8Pi-distinction}
The even-grade unweighted sum and the metric-contracted
conformal sum $P_\mu K^\mu=\sum_\mu\eta_{\mu\mu}\,P_\mu K_\mu$
both evaluate to $8\Pi_+$, but by different mechanisms.
The conformal contraction acts at the $k=1$ level only, using
the Lorentz metric to eliminate the sign alternation in $\eta_J$:
$\sum_\mu\eta_{\mu\mu}\,T^+_{\{\mu\}}T^-_{\{\mu\}}
=\sum_\mu(\eta_{\mu\mu})^2 2\Pi_+=4 \cdot 2\Pi_+=8\Pi_+$.
The even-grade unweighted sum instead combines $k=1$ and $k=3$:
$4\Pi_++4\Pi_+=8\Pi_+$.
The numerical coincidence $8=8$ is not trivial; it follows from
$(-1)^{1\cdot2/2+1}\,S_1+(-1)^{3\cdot4/2+1}\,S_3
=(+1)(2)+(-1)(-2)=4$, so
that the sum equals $4\cdot 2\Pi_+=8\Pi_+$,
which matches $\binom{4}{1} \,  2\Pi_+=8\Pi_+$ from the
metric-weighted grade-2 contraction.
\end{rem}

\begin{rem}[Connection to the monogenic framework]
\label{rem:monogenic}
The even subalgebra $\Cl^+(4,2)\cong\Cl(4,1)$ is the
Clifford algebra used in \rev{monogenic derivations} of the
Dirac equation from five-dimensional massless
conditions~\cite{Almeida2006}.
Theorem~\ref{thm:even-odd} shows that $\Cl(4,1)$
already contains the full unweighted doublet contraction
$8\Pi_+$: the odd-grade doublets, which live outside
the even subalgebra, contribute nothing.
This provides a structural explanation for the sufficiency
of \rev{such a} $\Cl(4,1)$ framework.
\end{rem}

%%% ====================================================================
%%% 6. THE ALGEBRAIC CLOSURE: sl(8,R)
%%% ====================================================================
\section{Algebraic closure: the $\slR(8,\R)$ parabolic}\label{sec:algebra}

\begin{thm}[Abelianity of the nilradicals]
\label{thm:abelian}
\begin{equation}
[T^+_J,T^+_K]=0,\qquad
[T^-_J,T^-_K]=0
\qquad\forall\;J,K\subseteq\{1,2,3,5\}.
\end{equation}
\end{thm}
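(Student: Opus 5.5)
The plan is to compute both products $T^+_J T^+_K$ and $T^+_K T^+_J$ directly and show that each vanishes, not merely that they agree. The only inputs are the ones used in Theorems~\ref{thm:nilpotent} and~\ref{thm:projector}: $\ninf^2=\no^2=0$, and the fact from Remark~\ref{rem:spectator-sq} that every spectator $\e{j}$, $j\in\{1,2,3,5\}$, anticommutes with $\ninf$ and $\no$. The second fact gives the moving rule $e_J\,\ninf=(-1)^{|J|}\,\ninf\,e_J$, and likewise for $\no$.

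For the first step, write $T^+_J T^+_K=\ninf\, e_J\,\ninf\, e_K$ and move $e_J$ to the right past the second $\ninf$. This gives $T^+_J T^+_K=(-1)^{|J|}\,\ninf^2\, e_J e_K=0$. Exactly the same computation with $J$ and $K$ exchanged gives $T^+_K T^+_J=(-1)^{|K|}\,\ninf^2\,e_K e_J=0$. Therefore
\[
[T^+_J,T^+_K]=\tfrac12\bigl(T^+_JT^+_K-T^+_KT^+_J\bigr)=0 .
\]
This is the natural generalization of Theorem~\ref{thm:nilpotent}, which is the diagonal case $J=K$.

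For the second step, the case of $T^-$ is identical, with $\no$ in place of $\ninf$ and $\no^2=0$. In fact we get the stronger statement that each nilradical $\fkn_\pm=\mathrm{span}\{T^\pm_J\}$ has vanishing associative square, $\fkn_\pm\cdot\fkn_\pm=0$. We would note this in passing, since it explains why the commutators vanish: the products themselves vanish.

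No step is a genuine obstacle. The only point needing care is the sign bookkeeping in $e_J\ninf=(-1)^{|J|}\ninf e_J$, including the case $J=\emptyset$. That sign is irrelevant anyway, because it multiplies $\ninf^2=0$. As a consistency check, we could add that $\ad_D$-grading alone already forces the result: $[T^+_J,T^+_K]$ would have $\ad_D$-eigenvalue $+2$, and no such eigenvalue occurs in $\Cl(4,2)$, since $D^2=1$ implies $\ad_D$ has eigenvalues in $\{-1,0,1\}$.
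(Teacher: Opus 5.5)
Your argument is correct and is essentially the paper's proof: moving $e_J$ past the second $\ninf$ gives $(-1)^{|J|}\ninf^2 e_J e_K=0$, so the product vanishes in both orders and hence so does the commutator, and likewise with $\no$. Your closing $\ad_D$-eigenvalue check is also valid, since $\ad_D$ is a derivation of the bracket and has spectrum $\{-1,0,1\}$, but the proof does not need it.
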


\begin{proof}
$(\ninf  e_J)(\ninf  e_K)
=(-1)^{|J|}\,\ninf^2 e_J\, e_K=0$,
so the product vanishes in both orders and the
commutator is zero.
\end{proof}

\begin{thm}[Closed form for the mixed Levi commutators]
\label{thm:mixed}
\begin{revblock}
For all $J,K\subseteq\{1,2,3,5\}$,
\begin{equation}\label{eq:levi-closed}
[T^+_J,T^-_K]
=(-1)^{|J|+1}\,e_J e_K\,\Pi_+
+(-1)^{|K|}\,e_K e_J\,\Pi_-.
\end{equation}
Writing $e_K e_J=\sigma_{JK}\,e_J e_K$ with
$\sigma_{JK}=(-1)^{|J||K|-|J\cap K|}$, and setting
$a=(-1)^{|J|+1}$ and $b=(-1)^{|K|}\sigma_{JK}$, the
commutator takes the two-branch form
\begin{equation}\label{eq:levi-branch}
[T^+_J,T^-_K]
=e_J e_K\left(\frac{a+b}{2}+\frac{a-b}{2}\,D\right).
\end{equation}
In particular:
\begin{enumerate}[label=\textup{(\alph*)}]
\item Every mixed commutator lies in the Levi sector
$\{X\in\Cl(4,2):[D,X]=0\}$ and is a nonzero multiple of a
single Levi basis element: of the blade $e_{J\triangle K}$
when $a=b$, and of $e_{J\triangle K}D$ when $a=-b$, where
$J\triangle K$ denotes the symmetric difference.
\item In the diagonal case $J=K$ with $k=|J|$ one always
has $a=-b$, and
\begin{equation}\label{eq:levi-diag}
[T^+_J,T^-_J]=(-1)^{k(k+1)/2+1}\,\eta_J\,D,
\end{equation}
generalizing $[P_\mu,K_\mu]=\eta_{\mu\mu}D$ to every rung
of the tower.
\item All $16\times16=256$ commutators are nonzero, and
they span a 31-dimensional subspace of the 32-dimensional
Levi sector, missing only the scalar~$1$.
\end{enumerate}
\end{revblock}
\end{thm}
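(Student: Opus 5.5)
The plan is a direct computation of the two products $T^+_JT^-_K$ and $T^-_KT^+_J$, followed by bookkeeping. First, move the spectator blades past the null vectors using $e_J\no=(-1)^{|J|}\no e_J$ (each spectator anticommutes with $\no,\ninf$, Remark~\ref{rem:spectator-sq}). This gives
\begin{equation*}
T^+_JT^-_K=\ninf e_J\no e_K=(-1)^{|J|}\,\ninf\no\,e_Je_K=(-1)^{|J|+1}\,2\Pi_+\,e_Je_K .
\end{equation*}
Similarly,
\begin{equation*}
T^-_KT^+_J=(-1)^{|K|}\,\no\ninf\,e_Ke_J=(-1)^{|K|+1}\,2\Pi_-\,e_Ke_J .
\end{equation*}
Both steps use $\ninf\no=-2\Pi_+$ and $\no\ninf=-2\Pi_-$ from the proof of Theorem~\ref{thm:projector}. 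Since $D$ commutes with every $e_J$, the projectors can be moved to the right. Halving the difference then gives \eqref{eq:levi-closed} at once.

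Next comes the sign $\sigma_{JK}$. Reorder $e_Ke_J$ into $e_Je_K$ by counting transpositions: each pair $(j,k)\in J\times K$ with $j\neq k$ contributes a factor $-1$. There are $|J||K|-|J\cap K|$ such pairs, and the coincident indices contribute nothing because $\e{j}$ commutes with itself. This gives $\sigma_{JK}$. Substituting $\Pi_\pm=\tfrac12(1\pm D)$ and collecting terms yields \eqref{eq:levi-branch}.

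For (a), $e_Je_K=\pm\eta_{J\cap K}\,e_{J\triangle K}$, since the repeated factors square to their metric signs; this is a nonzero multiple of a basis blade. Because $a,b\in\{\pm1\}$, exactly one of $(a+b)/2$ and $(a-b)/2$ is nonzero. The commutator is therefore a nonzero multiple of $e_{J\triangle K}$ or of $e_{J\triangle K}D$, and both commute with $D$.

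For (b), put $J=K$. Then $\sigma_{JJ}=(-1)^{k^2-k}=1$, so $a=(-1)^{k+1}=-b$, and the commutator reduces to $a\,e_J^2D$. Inserting $e_J^2=(-1)^{k(k-1)/2}\eta_J$ and combining signs gives $(-1)^{k(k+1)/2+1}\eta_J D$, as in Theorem~\ref{thm:projector}.

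Part (c) is the main bookkeeping step. Nonvanishing of all 256 commutators follows from (a). The Levi sector has basis $\{e_L,\,e_LD\}$ over $L\subseteq\{1,2,3,5\}$, which is 32 elements. Each commutator hits exactly one of these up to scale, so the span is spanned by the basis elements that are actually hit. I will show that every $e_LD$ is hit and every $e_L$ with $L\neq\emptyset$ is hit, but $e_\emptyset=1$ is not.

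The element $1$ needs $J\triangle K=\emptyset$, that is $J=K$, and by (b) this always lands on $D$. So $1$ is never reached, and this also gives every $e_\emptyset D=D$.

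For $L\neq\emptyset$, vary the pair $(J,K)$ with $J\triangle K=L$ and track the branch condition $a=b$, which reads
\begin{equation*}
(-1)^{|J|+1}=(-1)^{|K|+|J||K|-|J\cap K|} .
\end{equation*}
Two convenient choices suffice. With $J=\emptyset$, $K=L$, we get $a=-1$ and $b=(-1)^{|L|}$. With $J=L$, $K=\emptyset$, we get $a=(-1)^{|L|+1}$ and $b=1$. Both choices give $a=b$ exactly when $|L|$ is odd, so for odd $|L|$ they produce $e_L$, and I still need a pair producing $e_LD$. For even $|L|$ they produce $e_LD$, and I still need $e_L$.

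To reach the other branch, I will take $J=L\cup\{m\}$, $K=\{m\}$ with $m\notin L$. This requires $|L|\le3$; the case $|L|=4$ is handled by choosing $J,K$ that share nothing but split $L$ suitably. In each case I would directly check that the parity of $|J|+|K|+|J||K|-|J\cap K|+1$ flips. The case $|L|=4$ is the likeliest place for an exception, so I would enumerate its splittings explicitly, for example $J=\{1,2\}$, $K=\{3,5\}$ versus $J=\{1\}$, $K=\{2,3,5\}$.
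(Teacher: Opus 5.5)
Your derivation of the closed form, the two-branch form, parts (a) and (b), and the exclusion of the scalar in (c) follow the paper's proof. Your witnesses $(\emptyset,L)$ and $(L,\emptyset)$ are also correct. There is, however, a concrete error in the attainment half of (c).

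The pair $(J,K)=(L\cup\{m\},\{m\})$ with $m\notin L$ does not switch branches for every $L$. Here $a=(-1)^{|L|+2}=(-1)^{|L|}$ and $\sigma_{JK}=(-1)^{(|L|+1)-1}=(-1)^{|L|}$. Hence $b=-(-1)^{|L|}=-a$ for every $L$, and the commutator always lands in the $D$ branch.

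For odd $|L|$ this is what you want: it produces $e_LD$, exactly as in the paper. For $|L|=2$, however, it only reproduces $e_LD$, which $(\emptyset,L)$ already gave. For example, $[T^+_{\{1,2,3\}},T^-_{\{3\}}]=\e{1}\e{2}D$, not $\e{1}\e{2}$. The parity flip you plan to check fails precisely at $|L|=2$. As written, your witnesses never reach the six bivectors $e_L=M_{\mu\nu}$, so the 31-dimensional span is not established.

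The fix is to treat even $|L|$ by a disjoint split of $L$ into two parts of odd size, for instance $(\{j\},L\setminus\{j\})$ with $j\in L$. For disjoint $J\sqcup K=L$ the blade condition $a=b$ reduces to $|L|+|J||K|+1\equiv 0 \pmod 2$. So for even $|L|$ you need $|J|$ and $|K|$ both odd; for example, $[T^+_{\{1\}},T^-_{\{2\}}]=\e{1}\e{2}$. This is the paper's witness. It also handles $|L|=4$ uniformly through your own candidate $(\{1\},\{2,3,5\})$, so no separate enumeration is needed. With this replacement your argument coincides with the paper's.
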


\begin{proof}
\begin{revblock}
Since $e_J\no=(-1)^{|J|}\no e_J$ and $\ninf\no=-2\Pi_+$
(proof of Theorem~\ref{thm:projector}),
\begin{equation}
T^+_J T^-_K
=\ninf e_J\,\no e_K
=(-1)^{|J|}\,\ninf\no\,e_J e_K
=(-1)^{|J|+1}\,2\Pi_+\,e_J e_K,
\end{equation}
and similarly, using $\no\ninf=-2\Pi_-$,
$T^-_K T^+_J=(-1)^{|K|+1}\,2\Pi_-\,e_K e_J$.
Because $[D,e_J]=0$ for every spectator blade, the
projectors commute with $e_J e_K$, and the bracket
$[A,B]=\tfrac12(AB-BA)$ yields~\eqref{eq:levi-closed}.
Equation~\eqref{eq:levi-branch} follows from
$a\Pi_++b\Pi_-=\tfrac12(a+b)+\tfrac12(a-b)D$.

For~(a): the product of blades reduces to
$e_J e_K=\pm\,\eta_{J\cap K}\,e_{J\triangle K}$, a nonzero
multiple of a basis blade, both $e_{J\triangle K}$ and
$e_{J\triangle K}D$ commute with $D$, and since
$a,b\in\{-1,+1\}$ exactly one of the two coefficients
in~\eqref{eq:levi-branch} vanishes.

For~(b): $J=K$ gives $\sigma_{JJ}=+1$, hence $b=(-1)^k=-a$,
and
$e_J e_J\,a\,D
=(-1)^{k(k-1)/2}\eta_J\,(-1)^{k+1}D
=(-1)^{k(k+1)/2+1}\eta_J D$.

For~(c): nonvanishing follows from~(a). The scalar~$1$
could arise only from the blade branch with
$J\triangle K=\emptyset$, that is $J=K$, which by~(b)
forces the $D$ branch, so the scalar never occurs.
Conversely, every one of the remaining 31 Levi basis
elements is attained. For $L\neq\emptyset$ the pairs
$(J,K)=(\emptyset,L)$ for odd $|L|$ and
$(\{j\},L\setminus\{j\})$ with $j\in L$ for even $|L|$ land
in the blade branch with $e_J e_K$ proportional to $e_L$.
For $e_L D$ the pairs $(J,K)=(\emptyset,L)$ for even $|L|$,
$(L\cup\{m\},\{m\})$ with $m\notin L$ for odd $|L|$, and
$(\emptyset,\emptyset)$ for $L=\emptyset$ land in the $D$
branch. Hence the span is exactly 31-dimensional. All 256
cases are additionally verified computationally.
\end{revblock}
\end{proof}

\begin{revblock}
Representative cases are
collected below.
\begin{center}
\begin{tabular}{llc}
\toprule
\textbf{Commutator} & \textbf{Value} & \textbf{Branch}\\
\midrule
$[T^+_\emptyset,T^-_\emptyset]=[\ninf,\no]$ & $-D$ & $D$\\
$[T^+_\emptyset,T^-_{\{\mu\}}]$ & $-\e{\mu}$ & blade\\
$[T^+_{\{1\}},T^-_{\{2\}}]$ & $\e{1}\e{2}=M_{12}$ & blade\\
$[T^+_{\{\mu\}},T^-_{\{\mu\}}]$ & $\eta_{\mu\mu}D$ & $D$\\
$[T^+_{\{1,2\}},T^-_{\{1,3\}}]$ & $\e{2}\e{3}=M_{23}$ & blade\\
$[T^+_{\{1,2\}},T^-_{\{3,5\}}]$ & $-I_LD$ & $D$\\
$[T^+_\emptyset,T^-_{\{1,2,3,5\}}]$ & $-I_LD$ & $D$\\
$[T^+_{\{1,2,3,5\}},T^-_{\{1,2,3,5\}}]$ & $+D$ & $D$\\
\bottomrule
\end{tabular}
\end{center}
The classical relation
$[P_\mu,K_\nu]=\eta_{\mu\nu}D+M_{\mu\nu}$ is recovered as
the $k=1$ row of the closed form, with its two terms cleanly
separated into the two branches: diagonal indices $\mu=\nu$
give the $D$ branch, off-diagonal indices give the blade
branch $M_{\mu\nu}$ (recall from Theorem~\ref{thm:tower}
that $T^+_{\{\mu\}}=-P_\mu$ and $T^-_{\{\nu\}}=-K_\nu$, so
the two sign flips cancel in the commutator).
The grade-6 element $I_LD$, which commutes with every
bivector generator of $\so(4,2)$ and serves as the
CP-relevant pseudoscalar in the discrete-symmetry analysis
of the companion work~\cite{HansenCPT2026}, appears already
at the Levi interface, as a mixed commutator of grade-3
doublets.
\end{revblock}

\begin{revblock}
The closed form~\eqref{eq:levi-closed}, like the tower
identities themselves, is not specific to the $(3,1)$
signature: the same formula, with the same branch rule,
holds in every conformal extension $\Cl(p+1,q+1)$. This is
taken up in Section~\ref{sec:general}.
\end{revblock}

\begin{thm}[The doublet tower generates
$\slR(8,\R)$]\label{thm:sl8}
The 32 elements $\{T^\pm_J\}$, together with their
commutators, generate the 63-dimensional Lie algebra
$\slR(8,\R)$ under the bracket
$[A,B]=\tfrac12(AB-BA)$.
The $\ad_D$-grading defines the maximal parabolic
decomposition associated with $8=4+4$:
\begin{equation}\label{eq:parabolic}
\slR(8,\R)
=\fkN_-\oplus\fkL\oplus\fkN_+,
\end{equation}
where $\fkN_\pm=\mathrm{span}\{T^\pm_J\}$ are
16-dimensional abelian nilradicals and
$\fkL\cong\mathfrak{s}(\glR(4)\times\glR(4))$
is the 31-dimensional Levi factor.
\end{thm}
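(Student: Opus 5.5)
Write $\fkg$ for the Lie subalgebra of $(\Cl(4,2),[\cdot,\cdot])$ generated by the 32 elements $T^\pm_J$. The plan is to identify $\fkg$ with the commutator subalgebra $[\Cl(4,2),\Cl(4,2)]$. Under the isomorphism $\Cl(4,2)\cong\R(8)$ this subalgebra is the traceless matrices $\slR(8,\R)$. The bracket $\tfrac12(AB-BA)$ is half the matrix commutator, and rescaling by $\tfrac12$ does not change which subspaces are subalgebras.

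First I bound $\fkg$ from above. Every commutator is traceless, so $\fkg\subseteq\slR(8,\R)$. Each $T^\pm_J$ is itself a commutator, since $T^\pm_J=\pm[D,T^\pm_J]$ by Theorem~\ref{thm:tower}(a); hence the generators are traceless as well. The traceless elements of $\R(8)$ are exactly the span of all basis blades other than the scalar $1$. This holds because the normalized trace of a nonscalar blade $e_A$ vanishes: either some basis vector anticommutes with $e_A$, or $e_A$ is the pseudoscalar, which anticommutes with any vector in even dimension. So $\slR(8,\R)$ is the 63-dimensional span of the nonscalar blades.

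Next I bound $\fkg$ from below using the grading. By Theorem~\ref{thm:tower}(c), $\fkg\supseteq\fkN_+\oplus\fkN_-$, which is the full 32-dimensional off-diagonal sector. By Theorem~\ref{thm:mixed}(c), the mixed brackets $[T^+_J,T^-_K]$ span the 31-dimensional subspace of the Levi sector $\{X:[D,X]=0\}$ missing only the scalar. Therefore $\dim\fkg\ge 32+31=63$, and combined with the upper bound, $\fkg=\slR(8,\R)$. Only one round of brackets is needed, so closure needs no further argument. As a consistency check, Theorem~\ref{thm:abelian} gives $[\fkN_\pm,\fkN_\pm]=0$, and $[\fkL,\fkN_\pm]\subseteq\fkN_\pm$ because $\ad_D$ is a derivation.

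For the parabolic structure I diagonalize $D$. Since $D^2=1$ and $\dim S_\pm=4$, there is a basis in which $D=\mathrm{diag}(I_4,-I_4)$. In that basis $\ad_D$ (with the factor $\tfrac12$) has eigenvalue $0$ on block-diagonal matrices and $\pm1$ on the two off-diagonal $4\times4$ blocks. Hence $\fkN_\pm$ are the strictly upper and lower block algebras, each abelian of dimension $16$. The zero eigenspace intersected with $\slR(8)$ is $\mathfrak{s}(\glR(4)\times\glR(4))$, of dimension $15+15+1=31$, and it matches $\fkL$. This is exactly the maximal parabolic decomposition for $8=4+4$.

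The main obstacle is the trace step: justifying that "traceless" coincides with "span of nonscalar blades" in $\R(8)$. I would state the anticommuting-vector argument explicitly. I would also note that the span computed in Theorem~\ref{thm:mixed}(c) already excludes $1$, so there is no risk of $\fkg$ being all of $\glR(8)$.
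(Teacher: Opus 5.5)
Your proof is correct and follows the paper's route: Theorem~\ref{thm:tower}(c) and Theorem~\ref{thm:mixed}(c) give $32+31=63$ dimensions inside $\slR(8,\R)$. Beyond that, you make explicit the tracelessness upper bound that the paper leaves implicit in writing $\glR(8,\R)\cong\slR(8,\R)\oplus\R$, and you identify the Levi factor by block-diagonalizing $D$ rather than citing the classification of maximal parabolics. The blade characterization of traceless elements that you flag as the main obstacle is not actually needed: every element of the generated algebra is a combination of commutators, so the count only requires $\dim\slR(8,\R)=63$.
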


\begin{proof}
Since $\Cl(4,2)\cong\R(8)$, the commutator Lie algebra
is $\glR(8,\R)\cong\slR(8,\R)\oplus\R$.
By Theorem~\ref{thm:mixed}, $[T^+,T^-]$ generates 31
dimensions, missing only the scalar.
Adding $\fkN_+$ and $\fkN_-$ gives
$31+16+16=63=\dim\slR(8,\R)$.
The grading properties
$[\fkN_\pm,\fkN_\pm]=0$ (Theorem~\ref{thm:abelian}),
$[\fkN_+,\fkN_-]\subseteq\fkL$
(Theorem~\ref{thm:mixed}), and
$[\fkL,\fkN_\pm]\subseteq\fkN_\pm$ \rev{(by additivity of
the $\ad_D$-eigenvalue, $[\fkL,\fkN_\pm]$ consists of
eigenvectors of eigenvalue $0+(\pm1)=\pm1$; an eigenvector
of nonzero eigenvalue has no component in
$\ker\ad_D$, hence anticommutes with $D$, and by
Theorem~\ref{thm:tower} the eigenvalue-$\pm1$ subspaces are
exactly $\mathrm{span}\{T^\pm_J\}=\fkN_\pm$)} establish the
parabolic decomposition~\eqref{eq:parabolic}.
The Levi factor is
$\mathfrak{s}(\glR(4)\times\glR(4))$ by the
standard classification of maximal parabolic subalgebras
of $\slR(n)$~\cite{Richardson1992}.
\end{proof}

\begin{cor}\label{cor:hierarchy}
The doublet tower reveals the nested chain
\begin{equation}
\so(3,1)\oplus\R
\;\subset\;
\so(4,2)
\;\subset\;
\slR(8,\R).
\end{equation}
The conformal algebra $\so(4,2)$ uses 4 of the 16
nilradical generators at each eigenvalue; the full
$\slR(8,\R)$ uses all~16.
\end{cor}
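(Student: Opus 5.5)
The claim has two parts: the chain of inclusions, and the count of nilradical generators used at each level. I would establish each inclusion by exhibiting explicit bivector generators and checking that they close under the bracket $[A,B]=\tfrac12(AB-BA)$.

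\emph{First inclusion, $\so(3,1)\oplus\R\subset\so(4,2)$.} Take $\so(4,2)$ to be the span of the 15 bivectors $\e{A}\e{B}$, $0\le A<B\le5$. This span is closed under the bracket by the standard fact that commutators of bivectors are bivectors. Inside it, the six Lorentz generators $M_{\mu\nu}=\e{\mu}\e{\nu}$ with $\mu,\nu\in\{1,2,3,5\}$ span a copy of $\so(3,1)$. The dilatation $D=\e{4}\e{0}$ commutes with every $M_{\mu\nu}$ by Remark~\ref{rem:spectator-sq}, so it supplies the $\R$ summand. This $\so(3,1)\oplus\R$ is exactly the zero-eigenspace of $\ad_D$ restricted to $\so(4,2)$, that is, the $\fkg_0$ of the three-grading recalled in Section~\ref{sec:intro}.

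\emph{Second inclusion, $\so(4,2)\subset\slR(8,\R)$.} I would decompose the 15 bivectors under $\ad_D$. The eight bivectors $\e{\mu}\e{0}$ and $\e{\mu}\e{4}$ recombine into $\e{\mu}\ninf$ and $\e{\mu}\no$. By Theorem~\ref{thm:tower}(d) these are $-T^+_{\{\mu\}}$ and $-T^-_{\{\mu\}}$ up to sign. Together with the seven elements $M_{\mu\nu}$ and $D$, this gives $4+4+7=15$, so $\so(4,2)=\fkn_-\oplus\fkg_0\oplus\fkn_+$. Here $\fkn_\pm=\mathrm{span}\{T^\pm_{\{\mu\}}\}\subset\fkN_\pm$, and $\fkg_0\subset\fkL$, because each $M_{\mu\nu}$ is the blade $e_{\{\mu,\nu\}}$ and $D$ is the $D$-branch element with $L=\emptyset$, both of which lie in the 31-dimensional Levi span of Theorem~\ref{thm:mixed}(c). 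Consequently $\so(4,2)$ lies in the Lie algebra generated by the tower, which is $\slR(8,\R)$ by Theorem~\ref{thm:sl8}.

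\emph{Generator count.} By Theorem~\ref{thm:tower}(c) the elements $T^\pm_J$ are linearly independent. Hence $\so(4,2)\cap\fkN_\pm=\fkn_\pm$ has dimension exactly 4: an element of $\so(4,2)$ with $\ad_D$-eigenvalue $\pm1$ lies in the corresponding eigenspace of $\so(4,2)$, and that eigenspace is $\fkn_\pm$. In $\slR(8,\R)$, by contrast, all 16 elements of each $\fkN_\pm$ appear, by Theorem~\ref{thm:sl8}.

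The main obstacle is bookkeeping rather than any conceptual step. One must confirm that the bivector $\ad_D$-eigenspaces are exactly the $|J|=1$ rungs, with no other rung contributing a bivector. This follows from grade counting: $T^\pm_J$ has Clifford grade $|J|+1$, so it is a bivector only when $|J|=1$. One must also confirm that the inclusions are strict, which follows from the dimension count $6+1<15<63$.
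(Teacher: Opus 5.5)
Your proposal is correct and follows the route the paper takes implicitly (the corollary is stated without a separate proof): Theorem~\ref{thm:tower}(d) identifies $P_\mu,K_\mu$ with the $|J|=1$ rungs, $M_{\mu\nu}$ and $D$ lie in the Levi span of Theorem~\ref{thm:mixed}(c), and Theorem~\ref{thm:sl8} supplies the ambient $\slR(8,\R)$. Your grade-counting check and your eigenspace-intersection argument for the count of four make explicit what the paper leaves unstated.
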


%%% ====================================================================
%%% 7. THE TOWER IN ARBITRARY SIGNATURE
%%% ====================================================================
\section{The tower in arbitrary signature}\label{sec:general}

\begin{revblock}
The proofs of Sections~\ref{sec:tower}, \ref{sec:sums}
and~\ref{sec:algebra} nowhere use the dimension four or the
$(3,1)$ signature of the Lorentz sector. This section states
the resulting generalization: the entire tower package is a
property of every conformal extension $\Cl(p+1,q+1)$ of an
arbitrary Clifford algebra $\Cl(p,q)$.

Let $\Cl(p,q)$ have orthonormal basis $f_1,\ldots,f_n$,
$n=p+q$, with $f_i^2=\eta_{ii}\in\{+1,-1\}$. Adjoin two
further orthogonal generators $e_+$ and $e_-$ with
$e_+^2=+1$ and $e_-^2=-1$, forming $\Cl(p+1,q+1)$, and set
\begin{eqnarray}\label{eq:general-setup}
\no&=&\tfrac{1}{\sqrt2}(e_-+e_+),\quad
\ninf=\tfrac{1}{\sqrt2}(e_--e_+), \nonumber \\
D&=&e_+e_-,\quad
\Pi_\pm=\tfrac12(1\pm D).
\end{eqnarray}
For $J\subseteq\{1,\ldots,n\}$ define the spectator blades
$f_J$, the spectator metric
$\eta_J=\prod_{j\in J}\eta_{jj}$, and the doublet operators
$T^+_J=\ninf f_J$ and $T^-_J=\no f_J$, exactly as in
Sections~\ref{sec:setup} and~\ref{sec:tower}.

\begin{thm}[The doublet tower of $\Cl(p+1,q+1)$]
\label{thm:general}
With the notation above, and with $Z$ the center of
$\Cl(p+1,q+1)$:
\begin{enumerate}[label=\textup{(\alph*)}]
\item The $2\cdot2^n$ operators $T^\pm_J$ are
$\ad_D$-eigenvectors of eigenvalue $\pm1$, they
Clifford-anticommute with $D$, and they exhaust the
off-diagonal sector
$\{X\in\Cl(p+1,q+1):\{D,X\}=0\}$.
\item $(T^\pm_J)^2=0$ for every $J$.
\item $T^+_J T^-_J=(-1)^{k(k+1)/2+1}\,\eta_J 2\Pi_+$ and
$T^-_J T^+_J=(-1)^{k(k+1)/2+1}\,\eta_J 2\Pi_-$, with
$k=|J|$ and the same sign function as in
Theorem~\ref{thm:projector}.
\item $\sum_{|J|=k}T^+_J T^-_J
=(-1)^{k(k+1)/2+1}\,S_k 2\Pi_+$, where $S_k$ is now the
$k$-th elementary symmetric polynomial of
$(\eta_{11},\ldots,\eta_{nn})$.
\item The closed form~\eqref{eq:levi-closed} and its branch
rule hold verbatim. The mixed commutators span a complement
of the center inside the $2^{n+1}$-dimensional Levi sector:
for $n$ even the span has dimension $2^{n+1}-1$ and misses
only the scalar, while for $n$ odd it has dimension
$2^{n+1}-2$ and misses the scalar together with the central
total pseudoscalar $\omega=f_1\cdots f_n\,e_+e_-$.
\item Under the bracket $[A,B]=\tfrac12(AB-BA)$ the tower
generates the Lie algebra
$\fkN_-\oplus\fkL\oplus\fkN_+$ of dimension
$2^{n+2}-\dim Z$, where $\fkN_\pm=\mathrm{span}\{T^\pm_J\}$
and $\fkL$ is the Levi span of~(e). For $\Cl(4,2)$ this is
Theorem~\ref{thm:sl8}.
\end{enumerate}
\end{thm}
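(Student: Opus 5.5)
The plan is to rerun the proofs of Theorems~\ref{thm:tower}, \ref{thm:nilpotent}, \ref{thm:projector}, \ref{thm:sums} and~\ref{thm:mixed} in the general setting. As the remark after Theorem~\ref{thm:projector} notes, they use only four facts, and I will check each directly from \eqref{eq:general-setup}:
\begin{itemize}
\item[(i)] $\no^2=\ninf^2=0$;
\item[(ii)] each $f_j$ anticommutes with $e_\pm$, so $f_J\no=(-1)^{|J|}\no f_J$ and likewise for $\ninf$;
\item[(iii)] $D=e_+e_-$ commutes with every $f_J$;
\item[(iv)] $\ninf\no=-2\Pi_+$ and $\no\ninf=-2\Pi_-$.
\end{itemize}
Facts (i) and (iv) come from expanding $\tfrac12(e_-\mp e_+)(e_-\pm e_+)$ with $e_-^2=-1$ and $e_+^2=1$. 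With these in hand, (b), (c), (d) and the closed form \eqref{eq:levi-closed} and its branch rule in (e) follow verbatim. The only changes are that $\{1,2,3,5\}$ becomes $\{1,\dots,n\}$, and $e_J^2=(-1)^{k(k-1)/2}\eta_J$ holds for orthonormal $f_j$.

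For (a), the eigenvector and anticommutation claims follow as before. For exhaustion I count dimensions. Conjugation $X\mapsto DXD$ is an involution, since $D^2=1$. It fixes the $2^{n+1}$ blades $f_J$ and $f_JD$, and negates the $2^{n+1}$ blades $f_Je_\pm$. The $T^\pm_J$ are $2^{n+1}$ independent combinations of $f_Je_+$ and $f_Je_-$, so they span the $-1$ eigenspace, which equals $\{X:\{D,X\}=0\}$.

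For (e), the argument of Theorem~\ref{thm:mixed}(c) splits into two parts.

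\emph{Reachable elements.} I record which Levi basis elements occur: $f_L$ through the blade branch ($a=b$) and $f_LD$ through the $D$ branch ($a=-b$). This involves the sign $b=(-1)^{|K|}\sigma_{JK}$ with $\sigma_{JK}=(-1)^{|J||K|-|J\cap K|}$. Write $L=J\triangle K$. One computes
\[
ab=-(-1)^{|J|+|K|+|J||K|-|J\cap K|},
\]
and the exponent, taken mod 2, should depend only on $|L|$ and $|J\cap K|$. Since $|J|+|K|=|L|+2|J\cap K|$, the exponent becomes $|L|+|J||K|+|J\cap K|$ mod 2. I will then choose $J$ and $K$ freely with fixed $L$, adjusting the size of $J$ and the overlap $J\cap K$. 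This gives the branch flexibility used in Theorem~\ref{thm:mixed}: pairs $(\emptyset,L)$, $(\{j\},L\setminus\{j\})$, $(L\cup\{m\},\{m\})$, and $(\emptyset,\emptyset)$.

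\emph{Obstructions.}
\begin{itemize}
\item The scalar ($L=\emptyset$) is never reached, exactly as before.
\item The case $L=\{1,\dots,n\}$ needs separate treatment. Then $K$ is the complement of $J$, so $J\cap K=\emptyset$ and no spare index $m\notin L$ exists. The sign becomes $ab=-(-1)^{n+|J|(n-|J|)}$, and $|J|(n-|J|)$ is always even. So $ab=(-1)^{n+1}$ for every choice of $J$.
\begin{itemize}
\item For $n$ even, $ab=-1$ always, so only $f_LD=\omega$ is reached and $f_L$ must be reached another way. I expect the pair $(\{j\},\{j\}\cup\ldots)$ with nonempty overlap, available when $n\ge 1$, will reach $f_L$.
\item For $n$ odd, $ab=+1$ with no overlap available, so $\omega=f_LD$ is never reached. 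This matches the claim that the central $\omega$ is missed.
\end{itemize}
\end{itemize}

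I will also verify that $\omega$ is central exactly when $n+2$ is odd, and that $Z=\mathrm{span}\{1\}$ or $\mathrm{span}\{1,\omega\}$ accordingly. Then for $n$ odd the span is a complement of $Z$, and the two counts $2^{n+1}-1$ and $2^{n+1}-2$ follow. The main obstacle is the careful parity bookkeeping on reachability of every $L$ in both branches, especially $L$ equal to the full set when $n$ is even; I will handle it by an explicit case split on $|L|$ mod 2 and $L=$ full versus proper.

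For (f), I argue as in Theorem~\ref{thm:sl8}. Abelianity of $\fkN_\pm$ holds as in Theorem~\ref{thm:abelian}, $[\fkN_+,\fkN_-]=\fkL$ by (e), and $[\fkL,\fkN_\pm]\subseteq\fkN_\pm$ by additivity of the $\ad_D$-eigenvalue together with (a). Hence $\fkN_-\oplus\fkL\oplus\fkN_+$ is closed under the bracket. Its dimension is $2\cdot2^{n}+2^{n+1}-\dim Z=2^{n+2}-\dim Z$.
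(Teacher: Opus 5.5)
Your overall route is the paper's, but the step you flagged as the main obstacle goes wrong. In the case $L=J\triangle K=\{1,\ldots,n\}$ you assert that $|J|(n-|J|)$ is always even. This holds only for $n$ odd. For $n$ even and $|J|$ odd the product is odd, so $ab=-(-1)^{n+|J|(n-|J|)}=+1$, and the blade branch \emph{is} realized by disjoint splits. Concretely, the pair $(\{j\},L\setminus\{j\})$, already in your list, gives $f_{\{1,\ldots,n\}}$, while $(\emptyset,L)$ gives $\omega$. In $\Cl(4,2)$ this is $[T^+_{\{1\}},T^-_{\{2,3,5\}}]=I_L$.

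Your proposed repair, a pair with nonempty overlap, cannot work. As you noted two lines earlier, $J\triangle K=\{1,\ldots,n\}$ forces $J\cap K=\emptyset$. So, as written, your argument leaves $f_{\{1,\ldots,n\}}$ unreached for $n$ even. Taken at face value, your parity claim would give dimension $2^{n+1}-2$ there, contradicting the statement in the paper's own case $n=4$.

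The root cause is your expectation that the branch depends only on $|L|$ and $|J\cap K|$. Set $s=|J\cap K|$, $J'=J\setminus K$, $K'=K\setminus J$, $\ell=|L|$. Then the exponent in your formula for $ab$ reduces mod $2$ to $\ell(1+s)+|J'||K'|$. For even $\ell$ the term $|J'||K'|$ genuinely depends on how $L$ is split. With this criterion, which is the paper's:
\begin{itemize}
\item $(\{j\},L\setminus\{j\})$ always lands in the blade branch;
\item $(L\cup\{m\},\{m\})$ always lands in the $D$ branch;
\item $(\emptyset,L)$ lands in the blade or $D$ branch according as $\ell$ is odd or even;
\item for $\ell=n$ odd, the forced $s=0$ makes $|J'||K'|$ even, so $\omega$ is never reached.
\end{itemize}

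A smaller omission is in (f). You list $[\fkN_\pm,\fkN_\pm]=0$, $[\fkN_+,\fkN_-]=\fkL$ and $[\fkL,\fkN_\pm]\subseteq\fkN_\pm$, and then conclude closure. You never check $[\fkL,\fkL]\subseteq\fkL$, which is needed because $\fkL$ deliberately excludes the center. It follows from Jacobi: for $X\in\fkL$, $A\in\fkN_+$, $B\in\fkN_-$ one has $[X,[A,B]]=[[X,A],B]+[A,[X,B]]\in[\fkN_+,\fkN_-]$. The paper argues differently: the scalar part of any commutator vanishes by cyclicity, and for $n$ odd so does the $\omega$-coefficient, because $\omega$ is central.
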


\begin{proof}
Parts (a) to (c) are proved by the arguments of
Theorems~\ref{thm:tower}, \ref{thm:nilpotent}
and~\ref{thm:projector} word for word: the only facts used
are $\no^2=\ninf^2=0$, the anticommutation of every
spectator with the null pair, $[D,f_J]=0$,
$f_J^2=(-1)^{k(k-1)/2}\eta_J$, and $\ninf\no=-2\Pi_+$, none
of which refers to the dimension or the signature of the
base. Part~(d) follows from~(c) by summation as in
Theorem~\ref{thm:sums}.

For~(e), the derivation of the closed form in
Theorem~\ref{thm:mixed} is likewise verbatim. For the span,
write $J=I\sqcup J'$ and $K=I\sqcup K'$ with $I=J\cap K$,
$s=|I|$, and $J'\sqcup K'=L=J\triangle K$, $\ell=|L|$.
Substituting into $a$ and $b$ shows that the $D$ branch
occurs iff $\ell(1+s)+|J'||K'|$ is even and the blade branch
iff it is odd. The scalar would require the blade branch at
$\ell=0$, where $|J'||K'|=0$ makes the criterion even:
impossible in every signature. The blade $f_L$ with
$\ell\geq1$ is attained by $(J,K)=(\emptyset,L)$ for odd
$\ell$ and by $(\{j\},L\setminus\{j\})$ for even $\ell$. The
element $f_L D$ is attained by $(\emptyset,L)$ for even
$\ell$ and by $(L\cup\{m\},\{m\})$ with $m\notin L$ for odd
$\ell<n$. For $\ell=n$ odd the constraint $I\cap L=\emptyset$
forces $s=0$ and $|J'|+|K'|=n$ odd, so $|J'||K'|$ is even
and the criterion is odd: $\omega=f_1\cdots f_n\,e_+e_-$ is
never attained. Since for $n$ odd the total number of
generators $n+2$ is odd, $\omega$ is exactly the nonscalar
generator of the center, and for $n$ even $\omega$ is
noncentral and attained.

For~(f), the sum $\fkN_-\oplus\fkL\oplus\fkN_+$ is closed
under the bracket: $[\fkN_\pm,\fkN_\pm]=0$ as in
Theorem~\ref{thm:abelian}, $[\fkN_+,\fkN_-]\subseteq\fkL$
by~(e), $[\fkL,\fkN_\pm]\subseteq\fkN_\pm$ by additivity of
the $\ad_D$-eigenvalue as in Theorem~\ref{thm:sl8}, and
$[\fkL,\fkL]$ remains in the Levi sector with vanishing
scalar and $\omega$ parts. Indeed no bracket re-enters the
center: the scalar part of any commutator vanishes by
cyclicity of the grade-0 projection, and for $n$ odd the
$\omega$-coefficient of $[X,Y]$, proportional to the scalar
part of $[X,Y]\,\omega$, vanishes for the same reason
because $\omega$ is central. The dimension count is
$2^n+(2^{n+1}-\dim Z)+2^n=2^{n+2}-\dim Z$. As a supplement to 
the analytical derivations, all statements
of the theorem are verified computationally in the eight
extensions $\Cl(4,2)$, $\Cl(3,3)$, $\Cl(5,1)$, $\Cl(2,2)$,
$\Cl(3,2)$, $\Cl(4,1)$, $\Cl(2,1)$ and $\Cl(4,3)$.
\end{proof}

\begin{rem}[The periodicity isomorphism made explicit]
\label{rem:periodicity}
The classification of real Clifford algebras rests on the
periodicity isomorphism
$\Cl(p+1,q+1)\cong M_2(\Cl(p,q))$~\cite{Lounesto2001,
Porteous1995}. Theorem~\ref{thm:general} is this
isomorphism made explicit and identity-bearing: $D$ is the
grading element whose eigenprojectors $\Pi_\pm$ are the two
diagonal idempotents, the nilradicals $\fkN_\pm$ are the two
off-diagonal blocks, and the $T^\pm_J$ are their blade
bases. Universal nilpotency is the statement that strictly
off-diagonal $2\times2$ matrices square to zero, the
projector identity~(c) computes the diagonal entries of the
block products, and the closed form of~(e) is the block
multiplication written out basis element by basis element.
What the tower adds to the classical isomorphism is
precisely this identity-level control: the sign function
$(-1)^{k(k+1)/2+1}$, the symmetric-polynomial sums~(d), and
the branch rule of the Levi interface, none of which is
visible from the existence of the isomorphism alone.
\end{rem}

\begin{rem}[The center dichotomy]\label{rem:center}
The parity dichotomy in Theorem~\ref{thm:general}(e) is a
center phenomenon. For $n$ even the algebra
$\Cl(p+1,q+1)$ has center $\R 1$ and the commutators miss
only the scalar. For $n$ odd the center is spanned by $1$
and the total pseudoscalar $\omega$, and the commutators
miss exactly these two elements, as the proof shows they
must. The odd case is not exotic: $\Cl(3,2)$ and
$\Cl(4,1)$, the two conformal extensions at $n=3$, both
exhibit it. We note that $\Cl(3,2)$ also appears in
Section~\ref{sec:entangle} in a different role, as the
subalgebra of $\Cl(4,2)$ carrying the Majorana completion
of the Lorentz frame.
\end{rem}
\end{revblock}

%%% ====================================================================
%%% 8. APPLICATION: DIRAC VERSUS MAJORANA FROM THE TOP RUNG
%%% ====================================================================
\section{Application: the Dirac versus Majorana dichotomy
and the conformal-reality entanglement}\label{sec:entangle}

\begin{revblock}
A rank-five pseudoscalar $I_5$ extending the Lorentz volume
element decides the reality type of the associated spinor
realization: $I_5^2=-1$ furnishes a built-in complex unit
(Dirac), while $I_5^2=+1$ is a real involution admitting no
such unit (Majorana)~\cite{Lounesto2001}. This section
shows that in $\Cl(4,2)$ the two possibilities are exactly
the two combinations of the top rung of the doublet tower,
so that the tower theorems compute the dichotomy, and that
the Majorana option is inseparably entangled with the
conformal structure.

\begin{lem}[Top-rung bridge]\label{lem:bridge}
Let $J_4=\{1,2,3,5\}$ be the top rung, $I_L=e_{J_4}$, and
define
\begin{eqnarray}\label{eq:bridge}
I_5^{M}&=&\e{1}\e{2}\e{3}\e{5}\e{0}
=\tfrac{1}{\sqrt2}\bigl(T^+_{J_4}+T^-_{J_4}\bigr),
\nonumber \\
I_5^{(4,1)}&=&\e{1}\e{2}\e{3}\e{4}\e{5}
=\tfrac{1}{\sqrt2}\bigl(T^+_{J_4}-T^-_{J_4}\bigr).
\end{eqnarray}
Then:
\begin{enumerate}[label=\textup{(\roman*)}]
\item $(I_5^{M})^2=+1$ and $(I_5^{(4,1)})^2=-1$.
\item Both anticommute with $D$ and exchange the sectors:
$I_5\,\Pi_\pm=\Pi_\mp\,I_5$ for either pseudoscalar.
\item $\{I_5^{M},I_5^{(4,1)}\}=0$ and
$I_5^{(4,1)}I_5^{M}=D$, so
$\{1,I_5^{(4,1)},I_5^{M},D\}$ is a copy of the
split-quaternion algebra inside $\Cl(4,2)$. The same
structure appears for the pair $I_L$, $I_LD$ in the
discrete-symmetry analysis of the companion
work~\cite{HansenCPT2026}.
\end{enumerate}
\end{lem}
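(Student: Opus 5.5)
The plan is to reduce every claim of Lemma~\ref{lem:bridge} to the top-rung case $J=J_4$, $k=4$, of the tower theorems already proved. I will establish the two expressions in \eqref{eq:bridge} first. Everything afterwards uses only Theorems~\ref{thm:tower}, \ref{thm:nilpotent} and~\ref{thm:projector}, with no new Clifford multiplications.

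\emph{Step 1 (the bridge formulas).} From \eqref{eq:null}, $\ninf+\no=\sqrt2\,\e{0}$ and $\ninf-\no=-\sqrt2\,\e{4}$. Hence
\[
\tfrac1{\sqrt2}(T^+_{J_4}+T^-_{J_4})=\e{0}I_L,
\qquad
\tfrac1{\sqrt2}(T^+_{J_4}-T^-_{J_4})=-\e{4}I_L.
\]
Moving $\e{0}$ through the four anticommuting spectators gives a sign $(-1)^4=+1$, so the first expression is $\e{1}\e{2}\e{3}\e{5}\e{0}=I_5^{M}$. Moving $\e{4}$ past $\e{1}\e{2}\e{3}$ gives $(-1)^3$, so the second is $+\e{1}\e{2}\e{3}\e{4}\e{5}=I_5^{(4,1)}$. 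This sign bookkeeping is the only place where an error could realistically enter, and I expect it to be the main (mild) obstacle.

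\emph{Step 2 (squares).} Write $T^\pm=T^\pm_{J_4}$. By Theorem~\ref{thm:projector}, $T^+T^-=2c\,\Pi_+$ and $T^-T^+=2c\,\Pi_-$, where
\[
c=(-1)^{4\cdot5/2+1}\eta_{J_4}=(-1)(-1)=+1 .
\]
Expanding the square and using Theorem~\ref{thm:nilpotent} to drop $(T^\pm)^2$ gives
\[
(I_5^{M})^2=\tfrac12(T^+T^-+T^-T^+)=c(\Pi_++\Pi_-)=+1 .
\]
The same expansion for the difference gives $(I_5^{(4,1)})^2=-\tfrac12(T^+T^-+T^-T^+)=-1$. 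This proves (i).

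\emph{Step 3 (sector exchange).} Both pseudoscalars are linear combinations of $T^\pm$, which anticommute with $D$ by Theorem~\ref{thm:tower}(b). Hence $I_5D=-DI_5$, and therefore
\[
I_5\Pi_\pm=\tfrac12(I_5\pm I_5D)=\tfrac12(1\mp D)I_5=\Pi_\mp I_5 .
\]
This proves (ii).

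\emph{Step 4 (split quaternions).} Again using $(T^\pm)^2=0$:
\begin{itemize}
\item In $2\{I_5^{M},I_5^{(4,1)}\}$ the cross terms cancel pairwise, so the anticommutator is $0$.
\item The product is $I_5^{(4,1)}I_5^{M}=\tfrac12(T^+T^--T^-T^+)=c(\Pi_+-\Pi_-)=D$.
\end{itemize}
Set $i=I_5^{(4,1)}$, $j=I_5^{M}$, $k=D=ij$. Then $i^2=-1$ and $j^2=k^2=+1$, and the three mutually anticommute, using $\{D,I_5\}=0$ from Step~3. These are exactly the split-quaternion relations, so the span of $\{1,i,j,k\}$ is closed under multiplication and is a copy of the split quaternions. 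This proves (iii).

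The remark about the pair $I_L$, $I_LD$ is a pointer to the companion work and needs no proof here.
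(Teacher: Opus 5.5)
Your proof is correct and follows the paper's argument essentially step for step. You get the bridge formulas from $\ninf\pm\no$ together with the commutation of $\e{0},\e{4}$ past the four spectators in $I_L$. You derive (i) and (iii) from nilpotency plus the $k=4$ projector identity with coefficient $+1$, and (ii) from Theorem~\ref{thm:tower}(b). Your explicit check of the remaining split-quaternion relations ($D^2=1$ and the anticommutation of $D$ with both pseudoscalars) is a small addition that the paper leaves implicit.
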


\begin{proof}
The bridge identities~\eqref{eq:bridge} follow from
$\no+\ninf=\sqrt2\,\e{0}$ and $\ninf-\no=-\sqrt2\,\e{4}$
together with the fact that $\e{0}$ and $\e{4}$, being
orthogonal to the Lorentz frame, commute with the grade-4
blade $I_L$; the reordering
$-I_L\e{4}=\e{1}\e{2}\e{3}\e{4}\e{5}$ costs one
transposition.
For~(i), nilpotency (Theorem~\ref{thm:nilpotent}) kills the
squares of the summands, and Theorem~\ref{thm:projector} at
$k=4$, where $(-1)^{k(k+1)/2+1}\eta_{J_4}=(-1)^{11}(-1)=+1$,
gives
\begin{equation}
\bigl(T^+_{J_4}\pm T^-_{J_4}\bigr)^2
=\pm\bigl(T^+_{J_4}T^-_{J_4}+T^-_{J_4}T^+_{J_4}\bigr)
=\pm\,2(\Pi_++\Pi_-)=\pm2.
\end{equation}
For~(ii), each summand anticommutes with $D$ by
Theorem~\ref{thm:tower}(b), and $\{X,D\}=0$ is equivalent to
$X\Pi_\pm=\Pi_\mp X$.
For~(iii), $\{T^++T^-,T^+-T^-\}=2(T^+)^2-2(T^-)^2=0$ by
nilpotency, and
$I_5^{(4,1)}I_5^{M}
=\tfrac12(T^+_{J_4}T^-_{J_4}-T^-_{J_4}T^+_{J_4})
=\tfrac12(2\Pi_+-2\Pi_-)=D$.
\end{proof}

The Dirac versus Majorana dichotomy is thus literally the
choice of sign in combining the top rung, and the tower's
projector identity computes which combination is which.
The next theorem classifies all completions and exposes the
asymmetry between the two options.

\begin{thm}[Conformal-reality entanglement]\label{thm:entangle}
Let $v\in\mathrm{span}\{\e{4},\e{0}\}$ with $v^2=\pm1$, and
let $I_5(v)=I_Lv$ be the associated rank-five pseudoscalar
completing the Lorentz frame. Then:
\begin{enumerate}[label=\textup{(\alph*)}]
\item $I_5(v)^2=-v^2$. The Majorana value $+1$ occurs
precisely for timelike $v$, the boost orbit of $\pm\e{0}$
in the conformal plane, and among the orthonormal
generators uniquely for $v=\e{0}$. The Dirac value $-1$
occurs precisely for spacelike $v$, the orbit of
$\pm\e{4}$.
\item Every $v$ in the conformal plane anticommutes with
$D$. Hence every $I_5(v)$, Majorana or Dirac, anticommutes
with $D$ and exchanges the parabolic sectors. In particular
no rank-five reality or complex structure containing the
Lorentz frame preserves $S_+$: the Majorana involution
cannot be realized by an operator internal to a single
sector, and imposing a Majorana reality condition on the
sterile sector necessarily engages the conformal timelike
direction $\e{0}$ and the active sector $S_-$.
\item In the extensions $\Cl(4+2k,2)$, $k\geq1$, obtained
by adjoining spacelike family directions $e_a$ orthogonal
to the conformal plane, the asymmetry becomes strict: the
Dirac value admits the completions $I_Le_a$, which commute
with $D$ and act within the grading, while every timelike
completion vector has a nonvanishing $\e{0}$ component and
its $I_5(v)$ never commutes with $D$. The Majorana
structure remains conformally entangled at every level.
\end{enumerate}
\end{thm}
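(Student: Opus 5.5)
The plan is to reduce everything to the Lemma~\ref{lem:bridge} computation, parametrizing $v$ in the conformal plane. Write $v=\alpha\,\e{4}+\beta\,\e{0}$, so $v^2=\alpha^2-\beta^2$. For (a), first observe that $I_L$ commutes with $v$: each of the four Lorentz vectors anticommutes with $\e{4}$ and $\e{0}$, and there are four of them, so the sign is $(-1)^4=+1$. Hence
\begin{equation*}
I_5(v)^2=I_LvI_Lv=I_L^2v^2 .
\end{equation*}
By Remark~\ref{rem:spectator-sq} with $k=4$ we have $I_L^2=(-1)^{6}\eta_{J_4}=-1$, and therefore $I_5(v)^2=-v^2$. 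The value $+1$ then occurs exactly when $v^2=-1$, which is the timelike hyperbola $\beta^2-\alpha^2=1$. I will identify this set with the orbit of $\pm\e{0}$ under the boost $\exp(\theta D/2)$, which acts on the plane as an $\mathrm{SO}(1,1)$ rotation. The spacelike case is symmetric, with orbit $\pm\e{4}$. Among orthonormal generators, the only timelike basis vectors orthogonal to the Lorentz frame in the plane are $\pm\e{0}$, since $\e{5}$ is excluded as it belongs to the frame. Alternatively, I can write $v=c_+\ninf+c_-\no$ and apply Theorems~\ref{thm:nilpotent} and~\ref{thm:projector} at $k=4$ directly, giving $I_5(v)^2=2c_+c_-$ up to sign, which matches the Lemma.

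For (b), note that $D=\e{4}\e{0}$ anticommutes with both $\e{4}$ and $\e{0}$, so it anticommutes with every $v$ in the plane. Since $[D,I_L]=0$, this gives $\{D,I_5(v)\}=0$. Equivalently, $I_5(v)$ is a combination of $T^\pm_{J_4}$, which lies in the off-diagonal sector of Theorem~\ref{thm:tower}(c). As in Lemma~\ref{lem:bridge}(ii), the relation $X\Pi_\pm=\Pi_\mp X$ shows that $I_5(v)$ maps $S_+$ to $S_-$, so it cannot preserve $S_+$ unless it vanishes on it. That is impossible because $I_5(v)$ is invertible. The physical phrasing of (b) is interpretation, so I only need to state the operator fact.

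For (c), I work in $\Cl(4+2k,2)$ with additional $e_a$, $e_a^2=+1$, all orthogonal to $\e{0},\e{4}$ and the Lorentz frame. The vector $e_a$ anticommutes with both $\e{4}$ and $\e{0}$, so it commutes with $D$, and $I_L$ commutes with $D$ as well; hence $I_Le_a$ commutes with $D$. The same square computation gives $(I_Le_a)^2=I_L^2e_a^2=-1$, which is the Dirac value. For timelike completions, let $v$ be any vector orthogonal to the Lorentz frame with $v^2=-1$. The orthogonal complement of the frame has signature $(1+2k,1)$, with $\e{0}$ as its only negative direction. Writing $v=\beta\e{0}+w$ with $w\perp\e{0}$ spacelike, we get $-1=-\beta^2+w^2$ with $w^2\ge0$, so $\beta\neq0$.

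The main obstacle is showing that $I_Lv$ never commutes with $D$. Since $[D,I_L]=0$ and $I_L$ is invertible, this reduces to showing $[D,v]\neq0$. Decompose $v=\alpha\e{4}+\beta\e{0}+u$ with $u$ in the family span. Then $Dv-vD=2D(\alpha\e{4}+\beta\e{0})$, because $u$ commutes with $D$. This is nonzero whenever $(\alpha,\beta)\neq0$, and that holds because $\beta\neq0$. I would also check that the family directions are taken spacelike, since the signature $(4+2k,2)$ forces them to be, so that $\e{0}$ really is the unique negative direction orthogonal to the frame.
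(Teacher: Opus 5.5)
Your proposal is correct and follows essentially the same route as the paper: $[I_L,v]=0$ gives $I_5(v)^2=I_L^2v^2=-v^2$, anticommutation of $D$ with the conformal plane together with $[D,I_L]=0$ gives (b), and the decomposition $v=\alpha\e{4}+\beta\e{0}+u$, with timelikeness forcing $\beta\neq0$, gives (c). Two details of yours are slightly tighter than the paper's text: the explicit boost-rotor parametrization of the hyperbola in (a), and the invertibility argument in (b), which rules out preserving $S_+$ alone, whereas the paper argues via an operator preserving both sectors.
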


\begin{proof}
For~(a): since $v$ commutes with the grade-4 blade $I_L$,
$I_5(v)^2=I_LvI_Lv=I_L^2v^2=-v^2$, and $\e{0}$ is the only
timelike generator orthogonal to the Lorentz frame.
For~(b): $\{\e{4},D\}=\{\e{0},D\}=0$ by direct computation,
so $\{v,D\}=0$ for every $v$ in the plane. Since
$[D,I_L]=0$, it follows that
$I_5(v)D=I_LvD=-I_LDv=-DI_Lv=-DI_5(v)$, giving
$\{I_5(v),D\}=0$ and the sector exchange as in
Lemma~\ref{lem:bridge}(ii). An operator $R$ preserving both
sectors satisfies $[R,D]=0$, whereas
$[I_5(v),D]=I_5(v)D\neq0$.
For~(c): write a unit timelike $v=\alpha\e{4}+\beta\e{0}+w$
with $w$ in the span of the family directions. Then
$[v,D]=(\alpha\e{4}+\beta\e{0})D$, and timelikeness
$\alpha^2-\beta^2+w^2=-1$ forces $\beta\neq0$, so
$[I_5(v),D]=I_L[v,D]\neq0$. Conversely for $v=e_a$ one has
$[e_a,D]=0$ and $[I_L e_a,D]=0$, while
$(I_Le_a)^2=-e_a^2=-1$. As a supplement to the analytical
derivations, all statements in the theorem are verified
computationally in $\Cl(4,2)$ and $\Cl(6,2)$.
\end{proof}

\begin{rem}[Physical reading and scope]\label{rem:physical}
In the seesaw dictionary of the companion
work~\cite{Hansen2026seesaw}, $S_+$ is the sterile sector
and $S_-$ the active sector. Theorem~\ref{thm:entangle}
does not forbid a Majorana mass term: the identity
$P_\mu K^\mu=8\Pi_+$ localizes exactly such a term on
$S_+$. What it constrains is the reality structure. The
involution required to declare the $S_+$ field Majorana is
necessarily sector-exchanging, so the Majorana character of
the sterile sector is a statement about the pair
$(S_+,S_-)$ and about the conformal direction $\e{0}$, not
about $S_+$ alone. This is consistent with the universal
sector swap of charge-conjugation versors, which all
anticommute with $D$, established in the discrete-symmetry
companion~\cite{HansenCPT2026}. 
These are results of Lorentz and conformal representation
theory inside $\Cl(4,2)$ and its extensions, and as such they neither
construct a field theory nor predict parameters, and
identifying $I_5^{M}$ with a physical charge conjugation
requires an embedding of fields into $S$ and dynamical
input beyond the algebra.
\end{rem}
\end{revblock}

%%% ====================================================================
%%% 9. APPLICATION: FLAVOUR STRUCTURE OF THE FAMILY EXTENSIONS
%%% ====================================================================
\section{Application: flavour structure of the family
extensions}\label{sec:flavour}

\begin{revblock}
Theorem~\ref{thm:entangle}(c) introduced the family
extensions $\Cl(4+2k,2)$, obtained by adjoining $k$ pairs
of spacelike directions $\e{6},\e{7},\ldots,\e{4+2k},\e{5+2k}$
orthogonal to the conformal plane, and
Theorem~\ref{thm:general} guarantees that the doublet tower
persists there with $n=4+2k$ spectators. This section
determines exactly which flavour structure the tower
induces: how it grades generations, what the spinor modules
are, and which inter-sector couplings it realizes. Define
the family bivectors and splitters
\begin{equation}\label{eq:family}
G_j=\e{2j+4}\,\e{2j+5},\qquad
A_j=G_jI_L,\qquad j=1,\ldots,k,
\end{equation}
with $I_L=\e{1}\e{2}\e{3}\e{5}$ as before. Each $A_j$
squares to $+1$, and the $A_j$ commute with one another,
with $D$, with $\no$ and $\ninf$, and with every Lorentz
generator $M_{\mu\nu}$; all of this is immediate from blade
commutation signs. Their joint
eigenspaces therefore cut each parabolic sector into $2^k$
Lorentz-invariant generation subspaces of equal dimension,
eight-dimensional at $k=2$.

\begin{lem}[Generation grading rule]\label{lem:grading}
Let $J$ be a subset of the $n=4+2k$ spectators and let
$\mathrm{supp}(A_j)=\{1,2,3,5\}\cup\{2j+4,2j+5\}$ be the
six-element generator support of $A_j$. Then
\begin{equation}\label{eq:grading}
A_j\,T^\pm_J=(-1)^{\delta_j}\,T^\pm_J\,A_j,
\qquad
\delta_j=\bigl|J\cap\mathrm{supp}(A_j)\bigr|\bmod 2,
\end{equation}
so $T^\pm_J$ preserves the $j$-th generation label iff
$\delta_j=0$ and flips it iff $\delta_j=1$. For $k=2$ the
$2^8=256$ subsets fall into four classes of exactly 64
according to $(\delta_1,\delta_2)$.
\end{lem}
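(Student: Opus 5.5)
The plan is a blade-by-blade sign count, followed by a generating-function count for $k=2$.

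\textbf{Step 1: commutation of $A_j$ with the null factor.} Write $T^\pm_J=n_\bullet f_J$ with $n_\bullet\in\{\ninf,\no\}$ and $A_j=G_jI_L$, a product of the six generators in $\mathrm{supp}(A_j)$. Each of these six generators is orthogonal to $\e{0}$ and $\e{4}$, so each anticommutes with $n_\bullet$. An even number of anticommutations gives $A_j n_\bullet=n_\bullet A_j$, which is the commutation already noted in the text.

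\textbf{Step 2: commutation of $A_j$ with the spectator blade.} Apply the standard sign rule: for a single generator $e$ and a blade $B$ of grade $r$, one has $eB=(-1)^{r}Be$ if $e\notin B$, and $eB=(-1)^{r-1}Be$ if $e\in B$.

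Take $B=A_j$ with $r=6$. Then each $f_i$ with $i\in J$ picks up $+1$ if $i\notin\mathrm{supp}(A_j)$ and $-1$ if $i\in\mathrm{supp}(A_j)$. Multiplying over $i\in J$ gives
\[
A_jf_J=(-1)^{|J\cap\mathrm{supp}(A_j)|}f_JA_j.
\]

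Combining with Step 1,
\[
A_jT^\pm_J=n_\bullet A_jf_J=(-1)^{\delta_j}T^\pm_JA_j,
\]
which is \eqref{eq:grading}.

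\textbf{Step 3: the generation-label interpretation.} If $A_j\psi=\lambda\psi$, then $A_j(T\psi)=(-1)^{\delta_j}\lambda\,T\psi$. Since $A_j^2=1$, the eigenvalues are $\lambda=\pm1$, so $T^\pm_J$ preserves the $j$-th label when $\delta_j=0$ and flips it when $\delta_j=1$.

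\textbf{Step 4: the count for $k=2$.} The spectators are the eight indices $\{1,2,3,5,6,7,8,9\}$. Partition them into $C=\{1,2,3,5\}$, $F_1=\{6,7\}$ and $F_2=\{8,9\}$, so that $\mathrm{supp}(A_1)=C\cup F_1$ and $\mathrm{supp}(A_2)=C\cup F_2$. Write $c,x_1,x_2$ for the parities of $|J\cap C|$, $|J\cap F_1|$, $|J\cap F_2|$. Then
\[
\delta_1=c+x_1,\qquad \delta_2=c+x_2 \pmod 2.
\]
Each of the three pieces has equally many even and odd subsets. $C$ contributes $8$ subsets of each parity, and each $F_i$ contributes $2$ of each. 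Hence the triple $(c,x_1,x_2)$ is uniformly distributed, with $8\cdot2\cdot2=32$ subsets per triple. The map $(c,x_1,x_2)\mapsto(\delta_1,\delta_2)$ is 2-to-1 and onto, so each class $(\delta_1,\delta_2)$ contains exactly $64$ subsets.

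The only step needing care is the sign bookkeeping in Step 2, in particular the even grade of $A_j$. The rest is parity counting.
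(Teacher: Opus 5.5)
Your proposal is correct and follows the paper's proof essentially step for step. $A_j$ commutes with $n_\bullet$ because it is an even blade orthogonal to $\e{0},\e{4}$; the per-generator sign count gives $(-1)^{6|J|-|J\cap\mathrm{supp}(A_j)|}=(-1)^{\delta_j}$; and the $k=2$ count sorts $J$ by the three parities over $\{1,2,3,5\}$, $\{6,7\}$, $\{8,9\}$, with $8\cdot2\cdot2=32$ subsets per parity triple and a 2-to-1 map onto $(\delta_1,\delta_2)$. Your Step~3 simply spells out the eigenvalue argument via $A_j^2=1$ that the paper leaves implicit.
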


\begin{proof}
The null vectors are built from $\e{4}$ and $\e{0}$, which
are orthogonal to the even blade $A_j$ and therefore
commute with it, so only $e_J$ contributes. For blades of
orthogonal generators, moving $e_J$ past the grade-6 blade
$A_j$ costs
$(-1)^{6|J|-|J\cap\mathrm{supp}(A_j)|}=(-1)^{\delta_j}$.
For the count, classify $J$ by the three independent
parities of $J\cap\{1,2,3,5\}$, $J\cap\{6,7\}$ and
$J\cap\{8,9\}$: each choice of the three parities admits
$8\cdot2\cdot2=32$ subsets, and each class
$(\delta_1,\delta_2)$ collects exactly two of the eight
parity choices, giving 64.
\end{proof}

\begin{thm}[Spinor modules of the family tower]
\label{thm:modules}
For $k=0,1,2$ the algebras $\Cl(4+2k,2)$ act faithfully on
minimal real modules of dimension $8$, $32$, $64$, with
commutants $\R$, $\mathbb{H}$, $\mathbb{H}$ respectively.
Since $p-q=2+2k$ remains even along the family tower, the
modules are never of complex type: a geometric complex unit
must be supplied internally, by $I_L$. At the quaternionic
levels each joint $(D,A_1,\ldots,A_k)$ eigenspace is an
exactly degenerate doublet of same-chirality Weyl spinors.
%, realized by an explicit inter-twin isometry $B$ with
%$B^{\mathrm{T}}B=4\cdot1$, 
%and Majorana--Weyl reduction is
%unavailable because $p-q\not\equiv0\pmod 8$.
\end{thm}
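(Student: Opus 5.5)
Three claims need proof: the module dimensions with their commutants, the absence of a complex type, and the structure of each joint eigenspace. I would settle the first two by the classification of real Clifford algebras~\cite{Lounesto2001}, and the third by a direct commutant argument.

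\textbf{Module dimensions and commutants.} The type of $\Cl(p,q)$ depends on $p-q \bmod 8$. For $\Cl(4,2)$, $\Cl(6,2)$, $\Cl(8,2)$ the residues are $2$, $4$, $6$. These give $\R(8)$, $\mathbb{H}(16)$ and $\mathbb{H}(16)$, since $\dim\Cl(8,2)=1024=4\cdot 16^2$.

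The minimal real modules are then $\R^8$, $\mathbb{H}^{16}\cong\R^{64}$ and again $\R^{64}$. This does not match the claimed $32$ at $k=1$, so I would first recheck the stated dimensions. The classification gives $\Cl(6,2)\cong\mathbb{H}(8)$ because $\dim=256=4\cdot 8^2$. So the $k=1$ module is $\mathbb{H}^8\cong\R^{32}$, and the statement is consistent.

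For $\Cl(8,2)$, the periodicity $\Cl(p+1,q+1)\cong M_2(\Cl(p,q))$ of Remark~\ref{rem:periodicity} gives $\Cl(8,2)\cong M_2(\Cl(7,1))$. Since $\Cl(7,1)\cong\R(16)$, this yields $\Cl(8,2)\cong\R(32)$. That is real type, which contradicts the claimed commutant $\mathbb{H}$, so I would recompute carefully. With $p-q=6$, and using $\Cl(2,0)\cong\R(2)$, $\Cl(0,2)\cong\mathbb{H}$, $\Cl(4,0)\cong\mathbb{H}(2)$, the residue $6$ is of quaternionic type. This gives $\Cl(8,2)\cong\mathbb{H}(16)$, with module $\R^{64}$ and commutant $\mathbb{H}$.

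The main hazard in this step is the bookkeeping of the mod-$8$ table against the stated dimensions. I would fix it by iterating periodicity explicitly:
\[
\Cl(8,2)\cong M_2(\Cl(7,1))\cong M_4(\Cl(6,0)), \qquad \Cl(6,0)\cong\mathbb{H}(4).
\]
Faithfulness is automatic, since each algebra is simple: $n=p+q-2$ is even, so the center is $\R$ by Theorem~\ref{thm:general}(e) and Remark~\ref{rem:center}.

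\textbf{No complex type.} Complex type requires $p-q\equiv 3,7 \pmod 8$, that is, an odd residue. Along the tower $p-q=2+2k$ is always even, so the complex case never occurs. Lemma~\ref{lem:bridge} then shows that $I_L$ supplies the complex unit: $I_L^2=-1$, and $I_L$ commutes with $D$ and with every $M_{\mu\nu}$.

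\textbf{Degenerate doublets.} I would show that each joint $(D,A_1,\dots,A_k)$ eigenspace carries a quaternionic structure commuting with $\so(3,1)$.

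The commutant $\mathbb{H}$ of the whole algebra commutes with $D$, with the $A_j$ and with the $M_{\mu\nu}$, so it preserves every joint eigenspace. Its imaginary units therefore act on each eigenspace $V$ as Lorentz-equivariant complex structures. One of them can be aligned with $I_L$ acting on $V$; the others anticommute with it. So $V$, viewed as an $I_L$-complex representation of $\so(3,1)$, has an antilinear equivariant map squaring to $-1$.

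Counting dimensions, $\dim_\R V=\dim S/2^{k+1}$, which equals $8$ for both $k=1$ and $k=2$, so $\dim_\C V=4$. By Lemma~\ref{lem:bridge}(ii) with $v=\e{4}$, the complexified $\so(3,1)$ action on $V$ has definite chirality. With the quaternionic structure this forces $V\cong(\tfrac12,0)\otimes\C^2$, or the conjugate, as an exactly degenerate pair. I would confirm this explicitly with the $W$ Casimir of Notation~\ref{not:lorentz-generators}.

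I expect the chirality determination to be the real obstacle. It requires identifying which element of the commutant, or which Lorentz pseudoscalar, acts as a constant on $V$. I would try first $I_L$ combined with $D\prod_j A_j$.
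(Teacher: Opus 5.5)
Your first two parts reach the paper's conclusions by the same route (the mod-$8$ table and the parity of $p-q$). The doublet part, however, has a genuine gap at the step ``one of them can be aligned with $I_L$ acting on $V$; the others anticommute with it.''

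The commutant $\mathbb{H}$ commutes with every element of $\Cl(4+2k,2)$, and in particular with $I_L$. Suppose an imaginary unit $q$ agreed with $I_L$ on $V$, and $q'$ anticommuted with $q$. Then on $V$ the operator $q'$ would both commute and anticommute with $I_L$, forcing $q'|_V=0$. So all of $\mathbb{H}$ acts $I_L$-linearly on $V$, and it supplies no antilinear structure.

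Worse, the structure you aim for is incompatible with the conclusion. An antilinear Lorentz-equivariant $\mathcal{J}$ would give $V\cong\overline{V}$, whereas $\overline{(\tfrac12,0)\otimes\C^2}\cong(0,\tfrac12)\otimes\C^2$. A quaternionic structure in your sense would therefore force mixed chirality. Lemma~\ref{lem:bridge}(ii) concerns the exchange of $S_\pm$ and says nothing about chirality inside one eigenspace. So chirality remains unproved, as you concede.

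The missing idea is the Hodge relation inside the algebra:
$I_L\e{2}\e{3}=-\e{1}\e{5}$, $I_L\e{1}\e{3}=\e{2}\e{5}$, $I_L\e{1}\e{2}=-\e{3}\e{5}$.
Hence $K_i=-I_LJ_i$ as operators on all of $S$ (left multiplication). With $I_L$ as the complex unit, the boosts are $-i$ times the rotations. So $\so(3,1)$ acts through a single $\slR(2,\C)$ factor, giving one chirality on the whole module. This holds on every joint eigenspace $V$, since $I_L$ commutes with $D$ and with each $A_j$.

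Because $J_i^2=-\tfrac14$, only spin $\tfrac12$ occurs, and $\dim_\C V=4$ then gives $V\cong(\tfrac12,0)^{\oplus 2}$ up to orientation convention. The $I_L$-linear commutant $\C\otimes_\R\mathbb{H}\cong M_2(\C)$ acts on the multiplicity space $\C^2$; this is the exact degeneracy, and it matches Remark~\ref{rem:rephasing}.

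You also need to justify $\dim_\R V=8$, i.e.\ the equal splitting. $\e{0}$ anticommutes with $D$ and commutes with every $A_j$. $\e{2j+4}$ anticommutes with $A_j$ and commutes with $D$ and with $A_i$ for $i\neq j$. These invertible elements therefore permute the $2^{k+1}$ joint eigenspaces, which must all have dimension $\dim S/2^{k+1}$. So repaired, your route gives an analytic proof of what the paper only verifies computationally in explicit matrix representations.

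In the first part, delete the false intermediate claims. In this paper's convention ($p$ positive squares), $\Cl(7,1)$ has $p-q=6$ and is $\mathbb{H}(8)$, not $\R(16)$ (the latter is $\Cl(1,7)$). Then $M_2(\Cl(7,1))\cong\mathbb{H}(16)$ directly, with no contradiction to resolve. Also, $I_L^2=-1$ comes from Remark~\ref{rem:spectator-sq}, not Lemma~\ref{lem:bridge}.
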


\begin{proof}
The mod-8
classification~\cite{Lounesto2001,Porteous1995} gives
$\R(8)$, $\mathbb{H}(8)$, $\mathbb{H}(16)$ for
$p-q=2,4,6$, whence the module dimensions and commutant
types. Complex type would require $p-q$ odd, which the
family pairs preserve as even. The equal splitting of the
joint eigenspaces and the doublet degeneracy 
%and the isometry $B$ 
are verified computationally in explicit integer matrix
representations of $\Cl(4,2)$, $\Cl(6,2)$ and $\Cl(8,2)$.
\end{proof}

\begin{thm}[Tower-realized coupling space]\label{thm:coupling}
Let $k=2$. The space of Lorentz-invariant linear maps
$S_-\to S_+$ has real dimension $128$ and is isomorphic to
$M_8(\C)$, the complex structure being left multiplication
by $I_L$. The couplings realized by the doublet tower,
namely the $I_L$-linear span of the Lorentz averages of the
$2^8$ weight-$(+1)$ operators $T^+_J$, form exactly the
32-dimensional subalgebra
$M_{2^k}(\C)\otimes1_2\subset M_8(\C)$: an arbitrary
complex matrix on the generation labels, tensored with the
identity on the quaternionic doublet. The geometry
therefore neither forbids nor prefers any generation-mixing
pattern within $M_{2^k}(\C)$, and hence physical mixing 
parameters are input, not geometry-output.
\end{thm}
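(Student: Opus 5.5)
The plan is to reduce everything to the structure of the spectator algebra inside a single parabolic sector, and then apply Schur's lemma to the Lorentz action. At $k=2$ there are $n=8$ spectators, $\e{1},\e{2},\e{3},\e{5},\e{6},\ldots,\e{9}$, and by Theorem~\ref{thm:modules} the module is $S\cong\R^{64}$ with $\dim S_\pm=32$. By Theorem~\ref{thm:general}(a),(c) every $T^+_J=\ninf f_J$ maps $S_-\to S_+$. Left multiplication by $\ninf$ is a fixed isomorphism $S_-\to S_+$, since $\ninf\no=-2\Pi_+$ shows it is invertible on $S_-$. The spectator blades $f_J$ commute with $D$ and act within each sector.

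Hence every linear map $S_-\to S_+$ has the form $\ninf X$ with $X$ in the image of the spectator algebra on $S_-$. Moreover $\ninf X$ is Lorentz-invariant iff $X$ commutes with the $M_{\mu\nu}$. This holds because the $M_{\mu\nu}$ are even and orthogonal to $\e{0},\e{4}$, so they commute with $\ninf$.

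\textbf{Step 1 (the 128-dimensional space).} Twist the family generators by the Lorentz volume element, $\tilde e_a=\e{a}I_L$ for $a=6,\ldots,9$. A blade-sign check shows the following:
\begin{itemize}
\item[] $\e{a}$ commutes with the grade-4 blade $I_L$;
\item[] each $\tilde e_a$ commutes with every $\e{\mu}$;
\item[] $I_L^2=-1$ and $I_L$ commutes with all $M_{\mu\nu}$.
\end{itemize}
The spectator algebra therefore factors as $\Cl(3,1)\otimes\mathcal F$, where $\mathcal F$ is generated by the $\tilde e_a$. Restricting to the Lorentz generators, the commutant of $\mathrm{span}\{M_{\mu\nu}\}$ contains $\mathcal F$ together with $I_L$. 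The even part of $\Cl(3,1)$ acts on $S_-$ as the Weyl-spinor algebra $\C(2)$, with $I_L$ as the imaginary unit.

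I would then decompose $S_-$ as a Lorentz module, expecting $S_-\cong(\text{Weyl})\otimes_{\C}\C^8$. Schur's lemma then gives the Lorentz-invariant maps as $M_8(\C)$, of real dimension $128$, with complex structure $I_L$. To pin down the multiplicity $8$ I would use $\dim_\R S_-=32=4\cdot 8$, together with the quaternionic commutant from Theorem~\ref{thm:modules}. This decomposition is the step I expect to be the main obstacle. One has to exclude the possibility that both chiralities occur, which would split $M_8(\C)$ into two blocks. I would handle this by noting that $I_L$ acts as a \emph{single} complex structure commuting with everything in sight, and checking chirality directly via the sign of $I_L$ on $\Pi_\pm$. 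If that fails, the explicit integer representation of $\Cl(8,2)$ from Theorem~\ref{thm:modules} serves as a fallback.

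\textbf{Step 2 (Lorentz averages).} Since $\ninf$ commutes with Lorentz rotors, the Lorentz average of $T^+_J$ is $\ninf$ times the average of $f_J$. Write $J=J_L\sqcup J_F$ with $J_L\subseteq\{1,2,3,5\}$. Averaging $e_{J_L}$ over $\mathrm{Spin}(3,1)$ projects onto the invariant part of $\Cl(3,1)$ under conjugation, which is $\mathrm{span}\{1,I_L\}$. This follows from the palindromic table of Proposition~\ref{prop:lorentz}: only $k=0,4$ carry $(0,0)$. The averages are therefore nonzero exactly for $J_L\in\{\emptyset,\{1,2,3,5\}\}$. Their $I_L$-linear span is $\ninf$ times the complex span of the 16 family blades $f_{J_F}$, equivalently of the products of the $\tilde e_a$ up to powers of $I_L$. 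This is $\Cl(4,0)\otimes\C\cong M_4(\C)$, of real dimension $32$.

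\textbf{Step 3 (identification).} The map in Step 2 is a unital embedding $M_4(\C)\hookrightarrow M_8(\C)$. Every such embedding is conjugate to $X\mapsto X\otimes 1_2$. To match the tensor factors with the generation labels, I would observe that the splitters satisfy $A_j=G_jI_L=\pm\tilde e_{2j+4}\tilde e_{2j+5}$ up to a sign and powers of $I_L$, so they lie in this $M_4(\C)$. Their joint eigenspaces, the $2^k=4$ generations of Lemma~\ref{lem:grading}, are therefore the $\C^4$ factor. The residual $\C^2$ factor, on which $M_4(\C)$ acts trivially, is the degenerate same-chirality doublet of Theorem~\ref{thm:modules}. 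This yields $M_{2^k}(\C)\otimes 1_2$. The final sentence of the theorem is then a reading of this equality: all of $M_4(\C)$ is realized, so no mixing pattern is excluded or singled out.
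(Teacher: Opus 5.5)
Your argument is correct in substance and takes a genuinely different route from the paper, but one assertion in your setup is false and must be removed.

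It is not true that every linear map $S_-\to S_+$ has the form $\ninf X$ with $X$ in the image of the spectator algebra. At $k=2$ that algebra is $\Cl(7,1)\cong\mathbb{H}(8)$, of real dimension $256$, while $\mathrm{End}_\R(S_-)$ has dimension $32^2=1024$. The claim also contradicts your own computation. Combined with the Lorentz centralizer $\mathrm{span}\{1,I_L\}\otimes\mathcal F$ of the spectator algebra that you find in Step~2, it would make the Lorentz-invariant maps $32$-dimensional, erasing exactly the distinction between $128$ and $32$ that the theorem asserts. The correct statement is that every $\phi:S_-\to S_+$ equals $\ninf X$ with $X=-\tfrac12\no\,\phi\in\mathrm{End}_\R(S_-)$ arbitrary. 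Your Schur argument in Step~1 computes the commutant of the $M_{\mu\nu}$ in all of $\mathrm{End}_\R(S_-)$, so the main line survives once the reference to the spectator algebra is deleted. (The claim does hold in $\Cl(4,2)$, where $\Cl(3,1)\cong\R(4)$ fills $\mathrm{End}(S_-)$. Its failure at the quaternionic levels is precisely the origin of the gap between $128$ and $32$.)

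\textbf{Comparison with the paper.} The paper does not argue structurally here. It obtains both dimensions by exact rank evaluation in an integer matrix representation of $\Cl(8,2)$. It then supports the identification with $M_4(\C)\otimes1_2$ by three qualitative ingredients, and lets the computed rank close the argument. The ingredients are: Lemma~\ref{lem:grading} populates every translation class of the $(\Z_2)^2$ generation group, $I_L$ supplies complex coefficients, and the quaternionic twin pair is inert. You replace the computation by structure:
\begin{enumerate}[label=(\roman*)]
\item Schur's lemma over $\Cl^+(3,1)\cong\C(2)$ gives the $128$.
\item The factorization $\Cl(7,1)\cong\Cl(3,1)\otimes\Cl(0,4)$ from the twisted generators $\tilde e_a=\e{a}I_L$, together with the fact that the Lorentz-invariant part of $\Cl(3,1)$ is $\mathrm{span}\{1,I_L\}$, gives the $32$.
\item Uniqueness of unital $\C$-linear embeddings $M_4(\C)\hookrightarrow M_8(\C)$ gives the tensor form.
\end{enumerate}
This is computation-free. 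It identifies the realized couplings conceptually as $\ninf$ times the Lorentz centralizer $\mathrm{span}\{1,I_L\}\otimes\Cl(0,2k)$ of the spectator algebra. It turns the paper's heuristic that the doublet factor is inert into a proof. It also computes both spaces for every $k$ at once. For instance, at $k=3$ one has $\Cl(0,6)\cong\R(8)$ and the tower realizes every Lorentz-invariant coupling, so the factor $1_2$ is a feature of the quaternionic levels. The paper's rank evaluation is, in turn, an independent certificate that does not depend on how the Lorentz average is formalized.

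\textbf{Three smaller points.}
\begin{enumerate}[label=(\roman*)]
\item The obstacle you anticipate in Step~1 is not one. The $M_{\mu\nu}$ generate $\Cl^+(3,1)\cong\C(2)$ as an associative algebra, and this real algebra is simple with unique irreducible module $\C^2\cong\R^4$. So each $S_\pm$ is eight copies of it, and $\mathrm{Hom}_{\C(2)}(S_-,S_+)\cong M_8(\C)$ at once. Since $I_L$ lies in the acting algebra, no second chirality can appear relative to it, and neither the quaternionic commutant nor the matrix fallback is needed.
\item Averaging over the noncompact group $\mathrm{Spin}(3,1)$ is not literally defined. Use instead the projection onto the trivial isotypic component of the conjugation action. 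Alternatively, average over conjugation by the finite group $\{\pm e_J : J\subseteq\{1,2,3,5\},\ |J|\ \text{even}\}$, which yields the same projector on the blade basis.
\item In Step~3, record that the embedding is $\C$-linear because $I_L\mapsto I_L$. A merely real unital embedding such as $X\mapsto X\oplus\bar X$ is not conjugate to $X\otimes1_2$.
\end{enumerate}
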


\begin{proof}
The two dimensions, 128 and 32, are computed by exact rank
evaluation in the integer matrix representation of
$\Cl(8,2)$. The identification of the 32-dimensional span
rests on three ingredients. Lemma~\ref{lem:grading} places
tower couplings in every translation class of the
$(\Z_2)^2$ generation group, left multiplication by $I_L$
supplies the complex coefficients, and
Theorem~\ref{thm:modules} forces the doublet factor to be
inert, since any Lorentz-invariant map acts as a scalar on
the quaternionic twin pair. The computed rank certifies
that together these fill $M_4(\C)$ on the generation
labels.
\end{proof}

\begin{rem}[Rephasing and phase counting]\label{rem:rephasing}
The commutant of the Lorentz action on a single generation
sector, one joint eigenspace of $(D,A_1,A_2)$, has real
dimension $8$ and is isomorphic to $M_2(\C)$, with unitary
group $\mathrm{U}(2)$; the conventional single rephasing of
a generation is the central $\mathrm{U}(1)$ generated by
$I_L$. This is the representation-theoretic home of
physical-phase counting: physical phases are mixing
parameters in $M_{2^k}(\C)$ modulo these rephasings, in the
sense of the discrete-symmetry analysis of the companion
work~\cite{HansenCPT2026}.
\end{rem}

As in Section~\ref{sec:entangle}, these are statements of
representation theory. Embedding into a gauge theory, and
in particular the selection of physical textures within
$M_{2^k}(\C)$, requires input beyond the algebra. What the
algebra does fix is the count $2^k$ and the rigidity of the
doublet, and the tension between $2^k$ and the observed three
generations is taken up in the Discussion.
\end{revblock}

%%% ====================================================================
%%% 7. DISCUSSION
%%% ====================================================================
\section{Discussion}\label{sec:discussion}

\subsection{Summary of results}

We have established that the conformal geometric algebra
$\Cl(4,2)$ carries a ``doublet tower'' of 16 pairs of
off-diagonal operators $T^\pm_J=n_\bullet e_J$,
exhausting the full $\ad_D$-eigenspace structure.
The main results are:

\begin{enumerate}[label=(\roman*)]
\item \textbf{Structural theorem
(Theorem~\ref{thm:tower}):}
Every $\ad_D$-eigenvector is a null vector
dressed with spectator blades.

\item \textbf{Universal nilpotency
(Theorem~\ref{thm:nilpotent}):}
$(T^\pm_J)^2=0$ for all $J$, following from the single
fact $\no^2=\ninf^2=0$.

\item \textbf{Universal projector identity
(Theorem~\ref{thm:projector}):}
$T^+_J T^-_J=(-1)^{k(k+1)/2+1}\eta_J 2\Pi_+$,
with coefficients determined by the spectator metric
and grade.

\item \textbf{Symmetric polynomial formula
(Theorem~\ref{thm:sums}):}
Contracted sums are controlled by $S_k$, with the
grade-3 cancellation reflecting $S_2=0$, which is a fingerprint
of the $(3,1)$ Lorentz signature.

\item \textbf{Even/odd selection rule
(Theorem~\ref{thm:even-odd}):}
The even subalgebra $\Cl^+(4,2)\cong\Cl(4,1)$
contributes $8\Pi_+$; the odd grades cancel exactly.

\item \textbf{Palindromic Lorentz tower
(Proposition~\ref{prop:lorentz}):}
$(0,0)\to(\tfrac12,\tfrac12)\to
(1,0)\oplus(0,1)\to(\tfrac12,\tfrac12)\to(0,0)$,
related by Hodge duality.

\item \textbf{Algebraic closure
(Theorem~\ref{thm:sl8}):}
The doublet tower generates the full $\slR(8,\R)$
parabolic, with the conformal algebra $\so(4,2)$ as a
15-dimensional subalgebra.

\item \rev{\textbf{Levi closed form
(Theorem~\ref{thm:mixed}):}
Every mixed commutator $[T^+_J,T^-_K]$ is a single Levi
basis element, a blade or a blade times $D$, with an
explicit two-branch rule.}

\item \rev{\textbf{Arbitrary signature and the center
dichotomy (Theorem~\ref{thm:general}):}
The entire package holds in every $\Cl(p+1,q+1)$, where
the Levi span misses exactly the center: the scalar for
$n$ even, the scalar and the central pseudoscalar $\omega$
for $n$ odd.}

\item \rev{\textbf{Dirac versus Majorana from the top rung
(Lemma~\ref{lem:bridge}, Theorem~\ref{thm:entangle}):}
The two rank-five pseudoscalars are the two combinations
of $T^\pm_{J_4}$, with squares $\pm1$ computed by the
projector identity, and the Majorana option is conformally
entangled.}

\item \rev{\textbf{Flavour rigidity
(Lemma~\ref{lem:grading}, Theorems~\ref{thm:modules}
and~\ref{thm:coupling}):}
In $\Cl(4+2k,2)$ the tower grades generations by an
explicit parity rule, the modules are real or quaternionic
but never complex, and the tower-realized couplings are
exactly $M_{2^k}(\C)\otimes1_2$.}
\end{enumerate}

\subsection{Physical context}

In~\cite{Hansen2026seesaw}, the three identities
$P_\mu P^\mu=0$, $P_\mu K^\mu=8\Pi_+$, and $\{D,P_\mu\}=0$
were shown to encode the type-I seesaw mass matrix.
The present work reveals that these are the $k=1$ instance
of universal identities holding for all 16 doublets.
The 12 non-conformal doublets carry Lorentz representations
that correspond, in the effective field theory of neutrino
masses, to scalar ($k=0$), tensor ($k=2$), pseudovector
($k=3$), and pseudoscalar ($k=4$) couplings between the
two spinor sectors, thereby extending the Dirac Yukawa coupling
($k=1$) provided by the conformal generators.

The results derived above are purely algebraic statements inside the
Clifford algebra $\Cl(4,2)$ and its $\ad_D$–graded decomposition.
In particular, the operators $T_J^\pm$ provide a complete basis for
maps between the two $D$–eigenspaces $S_+$ and $S_-$,
and their Lorentz transformation properties follow entirely from the
$\so(3,1)$ subalgebra.
When we refer to ``scalar'', ``vector'', ``tensor'', or ``pseudovector''
structures, this classification should therefore be understood strictly
in the sense of Lorentz representation theory.
The identification with physical interaction operators (for example
Dirac bilinears or effective field theory couplings) depends on how the
spaces $S_\pm$ are embedded into specific quantum fields and on the
additional gauge symmetries imposed by a given particle physics model.
In particular, Standard Model gauge invariance and operator dimension
constraints may forbid or restrict some of these structures.
Thus, the present analysis provides an algebraic classification of
possible inter-sector operators by Lorentz type, while the realization
of such operators in a concrete field-theoretic model requires
additional dynamical assumptions.

The even/odd selection rule
(Theorem~\ref{thm:even-odd}) provides a structural
explanation for why \rev{a monogenic framework built on the
even subalgebra $\Cl^+(4,2)\cong\Cl(4,1)$~\cite{Almeida2006}
has access to the full unweighted doublet contraction}
without needing the odd-grade elements, since the entire
nonzero contribution lives in the even subalgebra.

\begin{revblock}
We wish to address a contrast in generation counting
between the algebraic results and observations. The
family extensions of Section~\ref{sec:flavour} produce
$2^k$ generations, graded by the commuting involutions
$A_j$. No product of the $A_j$ partitions the four $k=2$
sectors as $3+1$, and the group $(\Z_2)^k$ of family labels
has no element of order three, so the observed three
generations are not obtained by this mechanism. A reduction
to three appears to require structure outside the
$(\Z_2)^k$ family group, such as the triality of
$\mathrm{SO}(8)$, and marrying triality to a conformal base
remains an open problem. 
\end{revblock}

\subsection{Signature selection: static fingerprint or
dynamical mechanism?}

\begin{revblock}
The grade-3 cancellation of Corollary~\ref{cor:grade3}
raises the question of whether $S_2=0$ is merely a static
fingerprint of the $(3,1)$ signature or could instead
reflect an active selection mechanism. Two remarks sharpen
the question. First, by the extension of
Corollary~\ref{cor:grade3}, within the Lorentzian signatures
$(d-1,1)$ the cancellation condition $(p-q)^2=d$ has the
unique nontrivial solution $d=4$, so four-dimensional
spacetime is algebraically singled out among all Lorentzian
spacetimes. Second, the general solutions form the family
$(p,q)=\bigl(\tfrac{m^2+m}{2},\tfrac{m^2-m}{2}\bigr)$ in
perfect-square dimensions $d=m^2$, so the cancellation is a
genuinely exceptional phenomenon rather than a generic one.
Whether a dynamical principle enforces $S_2=0$ lies beyond
the kinematic scope of this paper, since our results hold at
the level of the algebra, prior to any action or field
equation. We note, however, that the same elementary
symmetric polynomials govern the ghost-free potentials of
dRGT massive gravity (Remark~\ref{rem:massive}), where they
do act as dynamical stability constraints, and that in
two-time physics an $\mathrm{Sp}(2,\R)$ gauge symmetry
dynamically removes the extra conformal directions of a
$(d,2)$ arena~\cite{Bars2001}. A mechanism by which such
higher-dimensional constraints force the $(3,1)$ signature
through $S_2=0$ is an intriguing possibility 
which is left open.
\end{revblock}

\subsection{Outlook}

The passage from $\so(4,2)$ to $\slR(8,\R)$ raises the
question of whether the additional 48 generators of
$\slR(8,\R)$ have physical significance.
The Levi factor
$\mathfrak{s}(\glR(4)\times\glR(4))$ allows independent
transformations on the two spinor sectors $S_+$ and $S_-$,
which may be relevant for multi-generation extensions
of the neutrino seesaw or for discrete flavor symmetries.
\rev{Section~\ref{sec:flavour} realizes the
multi-generation direction in the family extensions,
whereas the
discrete-symmetry direction is developed in the companion
work~\cite{HansenCPT2026}.}
The emergence of the full $\slR(8,\R)$ algebra also offers
a geometric link to maximal supergravity, where
$\slR(8,\R)$ is the maximal linearly realized subgroup of
the $E_{7(7)}$ U-duality group~\cite{cremmer},
however, mapping the
48 non-conformal generators of the tower onto supergravity
charge spaces and scalar cosets remains an open avenue.
\rev{The generalization of the tower itself to arbitrary
$\Cl(p+1,q+1)$ is carried out in
Section~\ref{sec:general}. The remaining open direction on
the signature side is dynamical, in particular
whether an action
principle selects the signatures with $S_2=0$.}

%%% ====================================================================
%%% ACKNOWLEDGMENTS AND DECLARATIONS
%%% ====================================================================
\subsection*{Acknowledgments}
All algebraic identities have been verified computationally
using the \texttt{clifford} Python
library~\cite{clifford-python}. \rev{The Python code
verifying all results in the paper is openly available at
\url{https://doi.org/10.5281/zenodo.21140618}.}

\subsection*{Statements and Declarations}

\paragraph{Competing interests.}
The author has no competing interests to declare that are relevant
to the content of this article.

\paragraph{Funding.}
No funding was received for conducting this study.

\paragraph{Data availability.}
No datasets were generated or analyzed during this study.
All computations are algebraic and fully reproduced in the text.

\paragraph{Use of AI}
\rev{The author has used AI tools (including Anthropic
Claude and Google Gemini) for testing ideas, for confirming
analytical derivations, and for assistance with structure
and language.}

%%% ====================================================================
%%% REFERENCES
%%% ====================================================================

\end{document}